\newtheorem{theorem}{Theorem}
\newtheorem{proposition}[theorem]{Proposition}
\newtheorem{lemma}[theorem]{Lemma}
\newtheorem{corollary}[theorem]{Corollary}
\newtheorem{remark}[theorem]{Remark}
\newcommand{\RR}{\mathbb{R}}
\newcommand{\R}{\mathbb{R}}
\newcommand{\N}{\mathbb{N}}
\newcommand{\Z}{\mathbb{Z}}
\newcommand{\eps}{\varepsilon}
\newcommand{\supp}{{\rm supp\ }}
\newcommand{\argmin}{{\rm argmin\ }}
\newcommand{\resc}{{\bf resc}}
\newcommand{\Em}{{\mathcal E}_m}
\newcommand{\mF}{{\mathcal F}}
\newcommand{\mFr}{{\mathcal F_{\resc}}}
\newcommand{\ird}{\int_{\mathcal{\R^d}}}
\newcommand{\prob}{\mathcal{P}(\R^d)}
\newcommand{\ds}{\displaystyle}
\def\ird{\int_{\R^d}}
\title{Free Energies and the Reversed HLS Inequality}
\author{J. A. Carrillo}
\address{Department of Mathematics, Imperial College London, London SW7 2AZ, UK}
\email{carrillo@imperial.ac.uk}
\author{M. G. Delgadino}
\address{Department of Mathematics, Imperial College London, London SW7 2AZ, UK}
\email{m.delgadino@imperial.ac.uk}
\date{\today}
\thanks{JAC and MGD were partially supported by EPSRC grant number EP/P031587/1. The authors are very grateful to the Mittag-Leffler Institute for providing a fruitful working environment during the special semester \emph{Interactions between Partial Differential Equations \& Functional Inequalities}.}
\begin{document}
\begin{abstract}
We prove reversed Hardy-Littlewood-Sobolev inequalities by carefully studying the natural associated free energies with direct methods of calculus of variations. Tightness is obtained by a dyadic argument, which quantifies the relative strength of the entropy functional versus the interaction energy. The existence of optimizers is shown in the class of $\prob$. With respect to their regularity, we study conditions for optimizers to be bounded functions. In a related model, we show the condensation phenomena, which suggests that optimizers are not in general regular.
\end{abstract}
\maketitle
\section{Introduction}

Given parameters $m>0$ and $k>0$, in this paper we are concerned with the minimization problem for the family of free energies $\mF_{k,m}:\prob\to[-\infty,+\infty]$
\begin{equation}\label{freeen}
\mF_{m,k}[\rho]=\ird \frac{\rho^m(x)}{m-1}\;dx + \iint_{\RR^d\times\RR^d} \frac{|x-y|^k}{2k}\rho(x)\rho(y)\;dxdy\,.
\end{equation}
They are the natural Liapunov functionals associated to aggregation-diffusion equations with homogeneous entropies and interaction potentials. For the range $k\in (-d,0)$, these equations have been studied in detail in  \cite{CCH1,CCHCetraro,CHMV}. The case $k>0$ was briefly analyzed in \cite{CCH1}, where the \textit{fair-competition} ($m=1-\frac{k}{d}$) and \textit{diffusion dominated ranges} ($m< 1-\frac{k}d$) were considered. It is shown that in these regimes, the equations do not exhibit a critical mass phenomena as for the cases with $k<0$ and the classical Keller-Segel case corresponding to the logarithmic kernel $k=0$, see \cite{BCC08}. The results of this paper concerns the \textit{aggregation dominated regime} $m> 1-\frac{k}d$. 

The use of free energies to understand the long-time asymptotics of gradient flow equations of the form 
\begin{equation*}
	\partial_t \rho = \nabla\cdot\left( \rho (\nabla W*\rho)\right) + \nabla\cdot\left(\rho\nabla U'(\rho)\right), \qquad t>0\,,
\end{equation*}
where $W\colon\R^d\to (-\infty,\infty]$ is the interaction potential and $U\colon[0,\infty)\to \R$ is the entropy functional, has attracted lots of attention in the last 20 years \cite{AGS,CaMcCVi03,CaMcCVi06,Villani03}. The connection to Hardy-Littlewood-Sobolev type functional inequalities \cite{CCL,BCL,CCH1} is well-known for the range $k\in (-d,0]$.

We classify the different behaviors in the range $k>0$ by the direct method of calculus of variations. We take advantage of the competing scalings and homogeneities in the functional $\mF_{m,k}$. The main results of this paper are summarized in Fig. \ref{Fig_mk}. If $m>1$, we show that the functional $\mF_{m,k}$ has a unique compactly supported minimizers (zone III in Fig. \ref{Fig_mk}). If $0<m<\frac{d}{d+k}$ the functional $\mF_{m,k}$ is not bounded below (zone I in Fig. \ref{Fig_mk}). The red solid line in Fig. \ref{Fig_mk} corresponds to the fair competition regime studied in \cite{CCH1,CCHCetraro}.

The main result of this paper are related to zone II in Fig. \ref{Fig_mk} corresponding to the range of parameters: $\frac{d}{d+k}<m<1$ and $k>0$. Section 3 shows that the free energy $\mF_{m,k}$ is bounded below in this parameter region. As in other similar situations as in \cite{CCH1,CCHCetraro,CCL}, the boundedness from below of the free energy $\mF_{m,k}$ implies certain functional inequalities. In our case, the inequalities obtained are the reversed HLS inequalities. In particular, we recover the inequality obtained in \cite{NN2017} as a particular curve in our range. 

Section 4 shows that, up to translations, all the minimizers of the free energy $\mF_{m,k}$ are radially decreasing probability measures with a possible condensation at the origin. This condensation is avoided in the range $m>\frac{d}{d-2}$ in zone II. The proof involves exploiting the Euler-Lagrange conditions for minimizers of $\mF_{m,k}$ in the set of probability measures and analyzing their integrability near the origin. This argument shows that minimizers have to be bounded functions in the aforementioned range. In particular, this implies that condensation can not occur in dimensions 1 and 2. Using softer arguments, we obtain another region where minimizers need to be absolutely continuous. However, for dimension $d\geq 3$ our arguments are not conclusive and condensation may happen for large values of $k$ and $m$ close to zero. The end of Section 4 contains an example of condensation in the related problem of minimizing entropy plus potential energy with a strongly confining potential. Finally, we point out that the minimizers of the free energy $\mF_{m,k}$ are equivalent to the optimizers of the reversed HLS inequality.

We note that the authors in \cite{DFH} show the reversed HLS inequality in zone II in Fig. 1 based on a relaxed variational problem with complementary techniques to ours.

\begin{figure}[ht!]
\centering
\begin{tikzpicture}
      \draw[->] (-0.2,0) -- (10,0) node[right] {$k$};
      \draw[->] (0,-0.2) -- (0,4) node[above] {$m$};
      
    \draw[scale=2.5,fill=gray!50!white] plot[smooth,samples=100,domain=0:0.43] ({\x},{1/(1+\x)}) -- 
    plot[smooth,samples=100,domain=0.43:0] (\x,{1});
      \draw[draw=gray!50!white,fill=gray!50!white] 
    plot[smooth,samples=100,domain=1:10] (\x,{1.75}) -- 
    plot[smooth,samples=100,domain=10:1] (\x,{2.5});
      
      \draw[scale=2.5,domain=0:1.2,smooth,variable=\x,red,thick] plot ({\x},{1-\x}) node[right] {$m=1-\frac{k}{d}$};
      \draw[scale=2.5,domain=0:4,smooth,variable=\x,black,thick]  plot ({\x},{1/(1+\x)}) node[right] {$m=\frac{d}{d+k}$};
      \draw[scale=2.5,domain=0:4,smooth,variable=\x,red, dashed]  plot ({\x},{3.5/(3.5+\x)}) node[right] {$m=\frac{d}{2d+k}$};
      \draw[thick](0,2.5) -- (10,2.5) node[right] {$m=1$};
      \draw[thick,blue] (0,1.75) -- (10,1.75) node[right] {$m=\frac{d-2}d$};
      
      \draw (2.4,-0.2) node {$d$};
      \draw (1.7,0.75) node {{\large \bf I}};
      \draw (6.5,1.5) node {{\large \bf II}};
      \draw (2.7,3.3) node {{\large \bf III}};
\end{tikzpicture}

\caption{Different parameter regions highlighting our main results. The red solid line corresponds to the fair-competition case. Zone I: the free energy \eqref{freeen} is not bounded below, there is a density for which the free energy is $-\infty$. Zone II: the free energy is bounded below, achieves its minimum and the reversed HLS inequality holds. Zone III: the free energy is bounded below and achieves its minimum at a compactly supported smooth function. Above the blue line minimizers of the free energy in zones II and III are absolutely continuous. The dashed red line corresponds to the reversed HLS inequality proved in \cite{NN2017}. In the shaded area of zone II the optimizers of the reversed HLS inequality are bounded and in the complement we cannot discard condensation for the optimizers except in other few cases treated in Section 4.
}
\label{Fig_mk}
\end{figure}
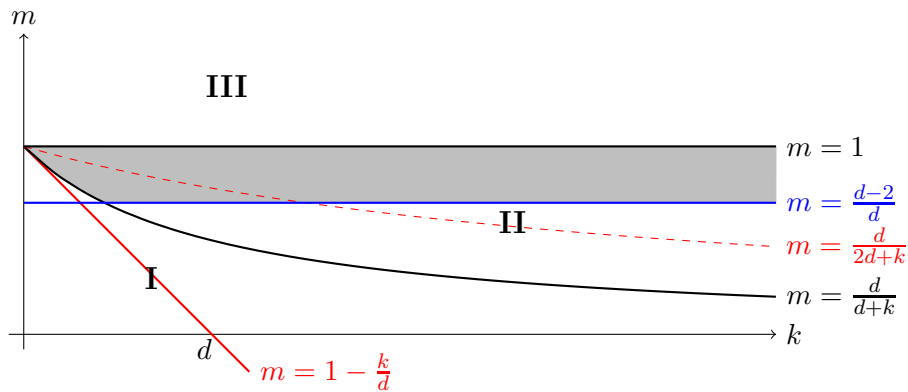

\section{Preliminaries}
We consider the space of probability measures in $\R^d$ denoted by $\prob$. Given $m\in(0,1)$ and any $\rho\in C^\infty_c(\R^d)\cap \prob$, we consider the $m$-th entropy function given by
\begin{equation*}
\mathcal{E}_m[\rho]=\ird \frac{\rho^m(x)}{m-1}\;dx.
\end{equation*}
Due to the convexity of the function $f(p)=p^m/(m-1)$, we know that \eqref{entropy} is weakly lower-semicontinuous, see \cite[Theorem 2.34]{AFP}. Exploiting weak lower-semicontinuity we can extend the functional to the whole of $\prob$ in the following way. Given $\rho\in\prob$, we define
\begin{equation*}
\Em[\rho]=\inf_{\substack{\{\rho_n\}_{n\in\N}\subset C_\mathrm{c}^\infty(\R^d)\cap\mathcal{P}(\R^d)\\ \mathrm{s.t.}\;\rho_n\rightharpoonup\rho}}\liminf_{n\to\infty}\Em(\rho_n).
\end{equation*}
It is classically known (see  \cite[Section 3.3]{buttazzo1989semicontinuity} for a complete discussion) that due to the sub-linearity of the functional the relaxation of this functional is given by the following formula. Given $\rho\in\prob$, we consider the decomposition $\rho=\rho_{ac}+\rho_s$, where $\rho_{ac}$, $\rho_s$ are the absolutely continuous and singular part with respect to the Lebesgue measure respectively. In this case, the $m$-th entropy function is given by
\begin{equation}\label{entropy}
\Em[\rho]=\ird \frac{\rho_{ac}^m(x)}{m-1}\;dx.
\end{equation}

Given $k>0$ and $\rho\in C^\infty_c(\R^d)\cap \prob$ we consider the $k$-th interaction energy given by
\begin{equation*}
	I_k[\rho]=\iint_{\RR^d\times\RR^d} |x-y|^k\rho(x)\rho(y)\;dxdy,
\end{equation*}
and $k$-th moment given by
\begin{equation*}
	J_k[\rho]:=\int_{\RR^d}|x|^k\rho(x)\;dx.
\end{equation*}
Because the function $W(x)=|x|^k$ is lower semicontinuous and positive these functionals are weakly lower-semincontinuous ( see \cite[Proposition 7.1-7.2]{santambrogio2015optimal}) and can be extended appropriately to $\prob$.

For any radially symmetric non-increasing function $\rho\in C^\infty_c(\R^d)\cap \prob$, we have the inequality
\begin{equation}\label{IJbound}
    J_k[\rho]\leq I_k[\rho] \leq 2\max\{1,2^{k-1}\}J_k[\rho]\end{equation}
which was shown in \cite[Lemma 4.1]{CCH1}. For general $\rho\in C^\infty_c(\R^d)\cap\prob$, it was proven in \cite[Lemma 2.7]{CDP} by compactness that there exists $\gamma>0$
\begin{equation}\label{gamma bound I J}
\gamma J_k[\rho]\leq I_k[\rho] \leq 2\max\{1,2^{k-1}\}J_k[\rho].
\end{equation}
In the general case, the constant $\gamma$ for the lower bound is not known explicitly. The family of free energies $\mF_{k,m}:\prob\to[-\infty,+\infty]$ can be written as:
\begin{equation*}
\mF_{m,k}[\rho]=\Em[\rho]+\frac1{2k} I_k[\rho],
\end{equation*}
where we interpret $\mF_{m,k}$ as the lower-semicontinuous extension of the restriction to $\prob\cap C_c^\infty (\R^d)$.

Now we discuss radially decreasing rearrangements. One can adapt \cite[Theorem 2.10]{burchard2009short} to our needs or use the Riesz rearrangement inequality \cite[Theorem 1.4.1]{kesavan}. 
\begin{lemma}
For all $\rho \in\prob$, we have $I_k[\rho^*]\le I_k[\rho]$, where $\rho^*$ is the radially symmetric decreasing rearrangement of $\rho$.
\end{lemma}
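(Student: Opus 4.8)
The plan is to deduce the inequality from the classical Riesz rearrangement inequality, the only twist being that the kernel $|x-y|^k$ is radially \emph{increasing} rather than decreasing, so it must first be truncated and complemented. I would first prove the statement for densities $\rho\in L^1_+(\R^d)$ with $\|\rho\|_1=1$, and then address the extension to general $\rho\in\prob$.

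For $R>0$ set $W_R(z):=(R^k-|z|^k)_+$. This is a nonnegative, radially symmetric, non-increasing, compactly supported function, hence it equals its own symmetric decreasing rearrangement, $W_R^*=W_R$. Applying the Riesz rearrangement inequality (\cite[Theorem 1.4.1]{kesavan}) to the triple $f=h=\rho$, $g=W_R$, and using $(\rho)^*=\rho^*$, gives
\begin{equation*}
\iint_{\R^d\times\R^d} W_R(x-y)\,\rho(x)\rho(y)\;dxdy \;\le\; \iint_{\R^d\times\R^d} W_R(x-y)\,\rho^*(x)\rho^*(y)\;dxdy .
\end{equation*}
Since $\rho$ and $\rho^*$ are probability densities, $\iint \rho(x)\rho(y)\,dxdy=\iint\rho^*(x)\rho^*(y)\,dxdy=1$; writing $W_R(x-y)=R^k-\min\{|x-y|^k,R^k\}$ and cancelling the common term $R^k$ on both sides yields
\begin{equation*}
\iint_{\R^d\times\R^d} \min\{|x-y|^k,R^k\}\,\rho^*(x)\rho^*(y)\;dxdy \;\le\; \iint_{\R^d\times\R^d} \min\{|x-y|^k,R^k\}\,\rho(x)\rho(y)\;dxdy .
\end{equation*}
Letting $R\to\infty$, since $\min\{|x-y|^k,R^k\}\uparrow |x-y|^k$, the monotone convergence theorem applied on both sides gives $I_k[\rho^*]\le I_k[\rho]$ (the inequality being trivial when the right-hand side is $+\infty$, and the argument delivering it in any case).

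The one genuinely delicate point is the passage from densities to arbitrary probability measures, since the singular part of $\rho$ must be incorporated into the definition of $\rho^*$ — the natural choice being to let it contribute an atom at the origin, which is exactly the condensation studied later in the paper. I expect this to cause no real trouble: either one quotes the measure-valued form of the rearrangement inequality obtained by adapting \cite[Theorem 2.10]{burchard2009short}, or one runs the truncation argument above after mollifying, $\rho\mapsto\rho*\varphi_\eps$, and passes to the limit $\eps\to 0$ using the weak lower semicontinuity of $I_k$ recorded in the Preliminaries together with the fact that $(\rho*\varphi_\eps)^*$ converges to $\rho^*$ in the relevant sense. The main obstacle is thus purely bookkeeping around the increasing kernel and the singular part; the analytic core is the one-line truncation trick together with Riesz's inequality.
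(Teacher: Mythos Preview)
Your proof is correct and follows essentially the same route as the paper: complement and truncate the increasing kernel to $W_R(z)=(R^k-|z|^k)_+$, apply the Riesz rearrangement inequality, and then pass to the limit/approximate. The paper's version is terser---it restricts first to $\rho\in C_c^\infty$ supported in $B_R$ so that the cutoff $(2R)^k$ never activates on the support, then says ``proceed by approximation''---but the analytic core is identical to yours.
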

\begin{proof}
The result is obtained by applying the Riesz rearrangement inequality \cite[Theorem 1.4.1]{kesavan} to the functional
$$
\iint_{\RR^d\times\RR^d} ((2R)^k-|x-y|^k)_+\rho(x)\rho(y)\;dxdy,
$$ 
defined on $\rho\in\prob\cap C_c^\infty (\R^d)$ with support in $B_{R}$, the euclidean ball centered at the origin of radius $R$ and proceed by approximation arguments. 
\end{proof}

Using this lemma and the properties of radial decreasing rearrangements we deduce that
\begin{equation*}
\Em[\rho^*]=\Em[\rho]\quad\mbox{and}\quad I_k[\rho^*]\le I_k[\rho],
\end{equation*}
where $\rho^*$ is the radially decreasing rearrangement of $\rho$. This implies that the family $\mF_{k,m}$ decreases under decreasing rearrangements.

\section{Reversed HLS inequality}
First we show the following auxiliary result, which is fundamental to showing the inequality.
\begin{proposition}\label{prop:bound}
Given $\chi>0$, $0<m<1$ and $k>\frac{d(1-m)}{m}$, then there exists $C(\chi,m,k,d)\in \R$ such that
\begin{equation*}
\inf_{\rho\in\mathcal{P}(\R^d)} \Em[\rho]+\chi J_k[\rho] \ge C.
\end{equation*}
\end{proposition}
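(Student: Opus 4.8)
The plan is to reduce the asserted bound to a one‑variable inequality of the form $\mathcal{E}_m[\rho]\ge -c\,J_k[\rho]^{\theta}$ with an exponent $\theta\in(0,1)$, and then finish by elementary calculus. Since $m<1$ we have $\mathcal{E}_m[\rho]=-\tfrac1{1-m}\int_{\R^d}\rho_{ac}^m\,dx\le 0$, so only the absolutely continuous part $\mu:=\rho_{ac}$ matters; note that $\mu(\R^d)\le 1$ and $\int_{\R^d}|x|^k\,\mu\le J_k[\rho]$. We may assume $J_k[\rho]<\infty$, since otherwise the asserted bound is trivial.

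First I would estimate $\int_{\R^d}\mu^m$ by splitting $\R^d=B_R\cup B_R^c$ at a free radius $R>0$. On $B_R$, Hölder's inequality with conjugate exponents $\tfrac1m$ and $\tfrac1{1-m}$ together with $\mu(B_R)\le1$ gives $\int_{B_R}\mu^m\le |B_R|^{1-m}=c_1R^{d(1-m)}$. On the complement I would write $\mu^m=(\mu\,|x|^k)^m\,|x|^{-km}$ and apply the same Hölder pair to get $\int_{B_R^c}\mu^m\le \big(\int_{B_R^c}|x|^k\,\mu\big)^m\big(\int_{B_R^c}|x|^{-km/(1-m)}\,dx\big)^{1-m}$. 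Here the hypothesis $k>\tfrac{d(1-m)}m$ is exactly the condition $\tfrac{km}{1-m}>d$ that makes the last integral finite and equal to $c_2R^{d(1-m)-km}$. Altogether $\int_{\R^d}\mu^m\le c_1R^{d(1-m)}+c_2\,J_k[\rho]^{m}\,R^{d(1-m)-km}$ for every $R>0$.

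Next I would optimize in $R$. With $a:=d(1-m)>0$ and the observation that $a-km<0$, the right-hand side above is a sum of two powers of $R$ and is strictly convex in $\log R$, so its infimum over $R>0$ is attained and can be computed explicitly; a short calculation (matching the scaling $\rho\mapsto\lambda^d\rho(\lambda\cdot)$, which sends $\int\mu^m\mapsto\lambda^{-d(1-m)}\int\mu^m$ and $J_k\mapsto\lambda^{-k}J_k$) shows it equals $C_1\,J_k[\rho]^{d(1-m)/k}$ for some $C_1=C_1(d,m,k)>0$; when $J_k[\rho]=0$ the infimum is $0$, consistent with $\rho=\delta_0$. Hence $\mathcal{E}_m[\rho]\ge-\tfrac{C_1}{1-m}\,J_k[\rho]^{d(1-m)/k}$.

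Finally, set $\theta:=d(1-m)/k$; the hypothesis forces $\theta<m<1$. Then $\mathcal{E}_m[\rho]+\chi J_k[\rho]\ge \chi\,t-\tfrac{C_1}{1-m}\,t^{\theta}$ with $t=J_k[\rho]\ge0$, and the scalar function $t\mapsto \chi\,t-\tfrac{C_1}{1-m}\,t^{\theta}$ is continuous on $[0,\infty)$, vanishes at $t=0$, and tends to $+\infty$ as $t\to\infty$ because $\theta<1$; therefore it attains a finite minimum, which we take as the constant $C$. The only genuinely delicate step is the far-field Hölder estimate, where the precise threshold $k>\tfrac{d(1-m)}m$ is used to guarantee integrability of $|x|^{-km/(1-m)}$ near infinity; everything else is bookkeeping and one-variable calculus.
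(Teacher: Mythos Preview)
Your argument is correct. The Hölder split at radius $R$ is clean: on $B_R$ you use $\int_{B_R}\mu^m\le|B_R|^{1-m}$, and on $B_R^c$ the weighted Hölder with $\mu^m=(\mu|x|^k)^m|x|^{-km}$ requires exactly $km/(1-m)>d$, i.e.\ the hypothesis $k>d(1-m)/m$, for the power $|x|^{-km/(1-m)}$ to be integrable at infinity. Optimising in $R$ yields $\int\mu^m\le C_1 J_k[\rho]^{d(1-m)/k}$ with the correct scaling exponent, and since $\theta=d(1-m)/k<1$ the final one-variable function $t\mapsto\chi t-\tfrac{C_1}{1-m}t^\theta$ is bounded below. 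All steps are sound.

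Your route is genuinely different from the paper's. The paper performs a full dyadic decomposition of $\R^d$ into annuli $B_{2^j}\setminus B_{2^{j-1}}$, bounds the entropy and moment termwise (this is the content of the auxiliary Lemma on dyadic bounds), separates the sum at $j=0$, and then controls the tail sum $\sum_{j>0}2^{jd(1-m)}\rho_j^m$ via a \emph{triple} Hölder with exponents $a^{-1},(m-a)^{-1},(1-m)^{-1}$ for an interpolated $a\in(d(1-m)/k,m)$, followed by Young's inequality. Your argument replaces this discrete machinery by a single continuous cut at a free radius and two applications of ordinary Hölder, which is shorter and conceptually more transparent; in effect you integrate out the dyadic scale from the start. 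The paper's approach, on the other hand, makes the constant $C(\chi,m,k,d)$ fully explicit in closed form and sets up a dyadic framework that it reuses later to prove sharpness of the threshold $k=d(1-m)/m$ by constructing a counterexample on the same annuli. Either method carries an explicit constant if one tracks it; the trade-off is brevity (yours) versus a reusable combinatorial template (theirs).
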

\begin{remark}
The constant $C(\chi,m,k,d)$ from Proposition~\ref{prop:bound} is explicitly calculated in its proof, see \eqref{eq for C}.
\end{remark}
We obtain Proposition~\ref{prop:bound}, by means of decomposing dyadically the respective energies, which is encoded in the following Lemma. 

\begin{lemma}\label{Lemma Dyadic Bounds}
Given $\rho\in C^\infty_c\cap\prob$, define
\begin{equation*}
\rho_j=\int_{B_{2^{j}}\setminus B_{2^{j-1}}}\rho(x)\;dx\qquad\mbox{for any $j\in \Z$}.
\end{equation*}
Then,
\begin{equation}\label{moment bound}
\frac{1}{2^k}\sum_{j\in\Z} 2^{jk}\rho_j\le J_k[\rho]\le \sum_{j\in\Z} 2^{jk}\rho_j.
\end{equation}
and
\begin{equation}\label{entropy bound}
\Em[\rho]\ge \frac{\omega_d^{1-m}}{2^{d(1-m)}} \sum_{j\in\Z} 2^{jd(1-m)}\frac{\rho_j^m}{(m-1)}
\end{equation}
\end{lemma}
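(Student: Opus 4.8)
The plan is to establish the two estimates separately; in both, the common preliminary is that the dyadic annuli $A_j:=B_{2^{j}}\setminus B_{2^{j-1}}$, $j\in\Z$, form a partition of $\R^d\setminus\{0\}$. Since $\rho\in C^\infty_c(\R^d)\cap\prob$ is an absolutely continuous, compactly supported probability density, the origin carries no mass and only finitely many $\rho_j$ are nonzero; hence
\[
J_k[\rho]=\sum_{j\in\Z}\int_{A_j}|x|^k\rho(x)\,dx,\qquad
\Em[\rho]=\sum_{j\in\Z}\frac{1}{m-1}\int_{A_j}\rho^m(x)\,dx,\qquad \sum_{j\in\Z}\rho_j=1,
\]
with all sums finite, so no convergence issue arises.

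For \eqref{moment bound} I would simply note that on $A_j$ one has $2^{j-1}\le|x|<2^{j}$, so that, since $k>0$, the pointwise bounds $2^{-k}\,2^{jk}\le|x|^{k}\le 2^{jk}$ hold on $A_j$. Multiplying by $\rho\ge0$, integrating over $A_j$, and summing over $j$ gives $2^{-k}\sum_j 2^{jk}\rho_j\le J_k[\rho]\le\sum_j 2^{jk}\rho_j$, which is exactly \eqref{moment bound}.

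For \eqref{entropy bound} the key tool is H\"older's inequality on each annulus — equivalently Jensen applied to the concave map $t\mapsto t^{m}$, valid since $0<m<1$: with conjugate exponents $1/m$ and $1/(1-m)$,
\[
\int_{A_j}\rho^{m}\,dx=\int_{A_j}\rho^{m}\cdot 1\,dx\ \le\ \Big(\int_{A_j}\rho\,dx\Big)^{m}|A_j|^{1-m}=\rho_j^{\,m}\,|A_j|^{1-m}.
\]
Because $|A_j|=\omega_d\big(2^{jd}-2^{(j-1)d}\big)\le\omega_d\,2^{jd}$ and $1-m>0$, we obtain $\int_{A_j}\rho^{m}\,dx\le\omega_d^{1-m}\,2^{jd(1-m)}\rho_j^{\,m}$. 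The one subtlety is the direction of the inequality: multiplying through by $\tfrac1{m-1}<0$ reverses it, so $\tfrac1{m-1}\int_{A_j}\rho^{m}\,dx\ \ge\ \tfrac{\omega_d^{1-m}}{m-1}\,2^{jd(1-m)}\rho_j^{\,m}$, and summing over $j$ gives a lower bound for $\Em[\rho]$ of the form \eqref{entropy bound}, the explicit constant being read off from the estimate on $|A_j|$.

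I do not foresee a genuine obstacle here: this is a bookkeeping lemma whose content is just the monotonicity of $t\mapsto t^{k}$ and the concavity of $t\mapsto t^{m}$ on the dyadic shells. The pitfalls are purely mechanical — keeping track of the inequality reversal coming from $m-1<0$ in the entropy estimate, and noting that the dyadic partition, which omits the origin, loses nothing precisely because $\rho$ is here an absolutely continuous density with compact support. This last observation is also what, via the relaxation recorded in Section~2, will let one pass from this case to general $\rho\in\prob$ when the lemma feeds into Proposition~\ref{prop:bound}.
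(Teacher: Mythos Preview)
Your proof is correct and follows essentially the same route as the paper: a dyadic decomposition, the pointwise bounds $2^{(j-1)k}\le |x|^k\le 2^{jk}$ on each annulus for \eqref{moment bound}, and Jensen/H\"older on each annulus for \eqref{entropy bound}. Your added remarks (finitely many nonzero $\rho_j$, the inequality reversal from $m-1<0$) make explicit what the paper leaves implicit; the only cosmetic point is that your crude bound $|A_j|\le\omega_d\,2^{jd}$ yields the constant $\omega_d^{1-m}$ rather than the stated $\omega_d^{1-m}/2^{d(1-m)}$, but this is immaterial for the sequel.
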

\begin{proof}[Proof of Lemma~\ref{Lemma Dyadic Bounds}]
First, we show \eqref{moment bound} by decomposing the integral dyadically
\begin{equation*}
 J_k[\rho]=\ird |x|^k\rho(x)\;dx=\sum_{j\in\Z} \int_{B_{2^{j}}\setminus B_{2^{j-1}}}|x|^k\rho(x)\;dx.
\end{equation*}
Next, we notice the bounds $2^{(j-1)k}\le |x|^k\le 2^{jk}$ on $B_{2^{j}}\setminus B_{2^{j-1}}$, which imply the bounds
\begin{equation*}
2^{(j-1)k}\rho_j\le \int_{B_{2^{j}}\setminus B_{2^{j-1}}}|x|^k\rho(x)\;dx\le 2^{jk}\rho_j.
\end{equation*}
Adding these inequalities in $j$ we obtain \eqref{moment bound}.

Similarly, to show \eqref{entropy bound} we start by decomposing the integral dyadically
\begin{equation*}
\Em[\rho]=\ird \frac{\rho^m}{m-1}\;dx=\sum_{j\in\Z}\int_{B_{2^{j}}\setminus B_{2^{j-1}}}\frac{\rho^m}{m-1}\;dx.
\end{equation*}
Next, we use Jensen's inequality with the hypothesis $0<m<1$, to bound
\begin{equation*}
\begin{array}{rl}
\displaystyle\frac{|B_{2^{j}}\setminus B_{2^{j-1}}|}{|B_{2^{j}}\setminus B_{2^{j-1}}|}\int_{B_{2^{j}}\setminus B_{2^{j-1}}}\rho^m\;dx
&\displaystyle\le |B_{2^{j}}\setminus B_{2^{j-1}}|\left(\frac{1}{|B_{2^{j}}\setminus B_{2^{j-1}}|}\int_{B_{2^{j}}\setminus B_{2^{j-1}}}\rho\;dx\right)^m\\
&\displaystyle =|B_{2^{j}}\setminus B_{2^{j-1}}|^{1-m}\rho^m_j.
\end{array}
\end{equation*}
Adding these inequalities in $j$ we obtain \eqref{entropy bound}.
\end{proof}
\begin{proof}[Proof of Proposition~\ref{prop:bound}]
From \eqref{entropy bound}, we get that for any $\rho\in C^\infty_c(\R^d)\cap \prob$,
\begin{equation*}
\Em[\rho]\ge \frac{\omega_d^{1-m}}{2^{d(1-m)}(m-1)} \sum_{j\in\Z} 2^{jd(1-m)}\rho_j^m.
\end{equation*}
First, using that $\rho_j\le1$ for any $j\in \Z$, we bound
\begin{equation*}
\sum_{j\le 0} 2^{jd(1-m)}\rho_j^m\le \sum_{j\le 0} 2^{jd(1-m)}=\frac{1}{1-2^{-d(1-m)}}.
\end{equation*}
Hence, we get
\begin{equation}\label{aux bound for entropy}
\Em[\rho]\ge \frac{\omega_d^{1-m}}{2^{d(1-m)}(m-1)}\left(\frac{1}{1-2^{-d(1-m)}}+\sum_{j>0} 2^{jd(1-m)}\rho_j^m\right).
\end{equation}
We choose $a=\left(m+\frac{d(1-m)}{k}\right)/2$ to satisfy $\frac{d(1-m)}{k}<a<m$. We bound the sum by applying triple H\"older with exponents $a^{-1}$, $(m-a)^{-1}$ and $(1-m)^{-1}$, to obtain
\begin{equation*}
\sum_{j>0} 2^{jd(1-m)}\rho_j^m\le \left(\sum_{j>0} 2^{jk}\rho_j\right)^a \left(\sum_{j>0} \rho_j \right)^{m-a}\left(\sum_{j>0} 2^{j\frac{d(1-m)-ak}{1-m}}\right)^{1-m}.
\end{equation*}
Using the mass condition, the negativity of the exponent $\frac{d(1-m)-ak}{1-m}$ by our choice of $a$ and the bound of the $k$-th moment \eqref{moment bound},  we can conclude
\begin{equation*}
\sum_{j>0} 2^{jd(1-m)}\rho_j^m\le \left(\frac{1}{1-2^\frac{d(1-m)-ak}{1-m}}\right)^{1-m}2^{ka}J_k[\rho]^a.
\end{equation*}
By Young's inequality we get that for any $\delta>0$, we have the bound
\begin{equation*}
\sum_{j>0} 2^{jd(1-m)}\rho_j^m\le \frac{(1-a)a^{\frac1{1-a}}2^{k\frac{a^2}{a-1}}\delta^{-\frac{a}{1-a}} }{\left(1-2^\frac{d(1-m)-ak}{1-m}\right)^{\frac{1-m}{1-a}}} +\delta J_k[\rho].
\end{equation*}
Updating \eqref{aux bound for entropy}, we get
\begin{equation*}
\Em[\rho]\ge \frac{\omega_d^{1-m}}{2^{d(1-m)}(m-1)}\left(\frac{1}{1-2^{-d(1-m)}}+\frac{(1-a)a^{\frac1{1-a}}2^{k\frac{a^2}{a-1}}\delta^{-\frac{a}{1-a}}}{\left(1-2^\frac{d(1-m)-ak}{1-m}\right)^{\frac{1-m}{1-a}}}+\delta J_k[\rho]\right).
\end{equation*}
We pick $\delta=-\frac{2^{d(1-m)}(m-1)}{\omega_d^{1-m}}\chi>0$ and we define
\begin{equation}\label{eq for C}
C(\chi,m,k,d):=\frac{\omega_d^{1-m}}{(2^{d(1-m)}-1)(m-1)}\left(1+\frac{(1-a)a^{\frac1{1-a}}2^{k\frac{a^2}{a-1}}(1-2^{-d(1-m)})\delta^{-\frac{a}{1-a}}}{\left(1-2^\frac{d(1-m)-ak}{1-m}\right)^{\frac{1-m}{1-a}}}\right),
\end{equation}
where we remember that $a=\left(m+\frac{d(1-m)}{k}\right)/2$. Then, we get 
\begin{equation*}
\Em[\rho]+\chi J_k[\rho]\ge C(\chi,m,k,d)\,,
\end{equation*}
for any $\rho\in C^\infty_c(\R^d)\cap \prob$.
The proof for every $\rho\in\prob$ follows by the extension of the energy.
\end{proof}

\begin{theorem}\label{thm energy bounded below}
Given $0<m<1$ and $k>\frac{d(1-m)}{m}$ or equivalently $m>\frac{d}{d+k}$ with $k>0$, then for any $\rho\in \prob$
\begin{equation*}
\mF_{m,k}[\rho]\ge  \frac{C(1,m,k,d)}{2k},
\end{equation*}
where $C(1,m,k,d)$ is obtained in Proposition~\ref{prop:bound}.
\end{theorem}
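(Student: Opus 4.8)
The plan is to symmetrize, bound the interaction energy below by the $k$-th moment, and then invoke Proposition~\ref{prop:bound}.

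\emph{Step 1: reduction to radially decreasing densities.} First I would use the rearrangement lemma of Section~2 together with the identities stated right after it, namely $\Em[\rho^*]=\Em[\rho]$ and $I_k[\rho^*]\le I_k[\rho]$, whence $\mF_{m,k}[\rho]\ge\mF_{m,k}[\rho^*]$. So it suffices to prove the lower bound for radially symmetric non-increasing densities; and since $\mF_{m,k}$ is by construction the lower-semicontinuous envelope of its restriction to $C_c^\infty(\R^d)\cap\prob$, it is enough to treat radial non-increasing $\rho\in C_c^\infty(\R^d)\cap\prob$ and then pass back to the envelope at the end.

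\emph{Step 2: from interaction energy to moment, and conclusion.} For radial non-increasing $\rho$, the elementary inequality \eqref{IJbound} gives $J_k[\rho]\le I_k[\rho]$, so
\begin{equation*}
\mF_{m,k}[\rho]=\Em[\rho]+\frac1{2k}I_k[\rho]\ \ge\ \Em[\rho]+\frac1{2k}J_k[\rho].
\end{equation*}
The hypothesis $k>\frac{d(1-m)}{m}$ is precisely the hypothesis of Proposition~\ref{prop:bound}, so I would apply that proposition with coupling constant $\chi=\frac1{2k}$, which bounds the right-hand side below by an explicit constant of the form \eqref{eq for C}. To present it in the normalization of the statement I would use the dilation invariance of $\prob$: for $\rho=\lambda^d\sigma(\lambda\,\cdot)$ with $\sigma\in\prob$ the entropy rescales by $\lambda^{d(m-1)}$ and the moment by $\lambda^{-k}$, and since $k-d(1-m)>0$ (because $k>\frac{d(1-m)}{m}>d(1-m)$) one can choose $\lambda$ so that the factor $2k$ is absorbed into the dilation, pull $\frac1{2k}$ outside and reduce the infimum to the case $\chi=1$, i.e. to $C(1,m,k,d)$. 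Concretely, this amounts to running the two dyadic estimates of Lemma~\ref{Lemma Dyadic Bounds} directly on $2k\,\mF_{m,k}[\rho^*]=2k\,\Em[\rho^*]+I_k[\rho^*]\ge 2k\,\Em[\rho^*]+J_k[\rho^*]$ and repeating the H\"older--Young bookkeeping of the proof of Proposition~\ref{prop:bound} with the extra factor $2k$ multiplying the entropy, choosing $\delta$ so that the leftover $J_k$-term is absorbed.

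The genuinely delicate point is this last piece of constant bookkeeping in Step~2: because the entropy and the moment carry the two competing homogeneities $\lambda^{d(m-1)}$ and $\lambda^{-k}$, one must track them carefully through Young's inequality and through the choice of $\delta$ to land exactly on $\frac{C(1,m,k,d)}{2k}$ rather than on some unidentified multiple of $C(1,m,k,d)$. The rest — the rearrangement reduction and the passage from $C_c^\infty(\R^d)\cap\prob$ to all of $\prob$ via the lower-semicontinuous extension — is routine and already prepared in Section~2.
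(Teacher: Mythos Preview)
Your core argument is exactly the paper's: reduce to radially decreasing densities via rearrangement, use $J_k[\rho]\le I_k[\rho]$ from \eqref{IJbound}, and invoke Proposition~\ref{prop:bound}. That is all the paper does, and it establishes the substantive claim (boundedness from below).

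Where you diverge is in the second half of Step~2, where you try to massage the constant into the precise form $\tfrac{C(1,m,k,d)}{2k}$. That effort is misplaced. The dilation you propose does not produce the factor $\tfrac{1}{2k}$: under $\rho\mapsto\lambda^d\rho(\lambda\,\cdot)$ one finds
\[
\inf_{\rho}\Bigl(\Em[\rho]+\tfrac{1}{2k}J_k[\rho]\Bigr)=(2k)^{\frac{d(1-m)}{k-d(1-m)}}\inf_{\rho}\Bigl(\Em[\rho]+J_k[\rho]\Bigr),
\]
and the exponent is positive, not $-1$. Likewise, rerunning the dyadic argument on $2k\,\Em[\rho]+J_k[\rho]$ yields a bound, but not $C(1,m,k,d)$, since Proposition~\ref{prop:bound} bounds $\Em+\chi J_k$, not $\chi\,\Em+J_k$. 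The paper's own one-line proof does not actually justify the specific constant either; the direct application of Proposition~\ref{prop:bound} gives $C\bigl(\tfrac{1}{2k},m,k,d\bigr)$. Since only the finiteness of the infimum is used downstream (Corollary~\ref{cor:HLS} works with $\bigl|\inf_{\prob}\mF_{m,k}\bigr|$, not with the explicit constant), you should simply apply Proposition~\ref{prop:bound} with $\chi=\tfrac{1}{2k}$ and stop there, rather than chase a normalization that neither your scaling nor the paper's proof delivers.
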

\begin{proof}
Using that the energy decreases for radially decreasing rearrangements, we only need to show the inequality in the class of radially symmetric decreasing probability measures. From the bound \eqref{IJbound}, we have that for any radially symmetric decreasing probability measure
\begin{equation*}
\mF_{m,k}[\rho]\ge \Em[\rho]+\frac1{2k}J_k[\rho]\ge \frac{C(1,m,k,d)}{2k},
\end{equation*}
where the last bound comes from Proposition~\ref{prop:bound}.
\end{proof}

\begin{corollary}\label{cor:HLS}
Given $0<m<1$, $k>0$ and $m>\frac{d}{d+k}$, for any $\psi\in C^{\infty}_c(\R^d)$ positive, we have the inequality
\begin{equation}\label{HLS inequality}
\ird \ird\!\!|x-y|^k\psi(x)\psi(y)\;dxdy\ge C_0 \left(\ird |\psi(x)|\;dx \right)^{2-\frac{mk}{d(1-m)}}\!\!\left(\ird |\psi(x)|^{m}\;dx \right)^{\frac{k}{d(1-m)}},
\end{equation}
where $C_0=C_0(m,k,d)>0$, due to Theorem~\ref{thm energy bounded below}, is given by
\begin{equation*}
C_0^{\frac{d(1-m)}{k-d(1-m)}}:=\frac{\left(\frac{d(1-m)}{k}\right)^\frac{d(1-m)}{k-d(1-m)}-\left(\frac{d(1-m)}{k}\right)^\frac{k}{k-d(1-m)}}{\ds\left|\inf_{\prob}\mF_{m,k}\right|}(1-m)^{-\frac{k}{k-d(1-m)}} (2k)^{\frac{d(1-m)}{k-d(1-m)}}.
\end{equation*}
\end{corollary}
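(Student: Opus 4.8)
The strategy is to deduce the reversed HLS inequality \eqref{HLS inequality} directly from the free energy bound of Theorem~\ref{thm energy bounded below} by testing it against a one-parameter family of rescalings of $\psi$. Both sides of \eqref{HLS inequality} are homogeneous of degree $2$ in $\psi$, so I may assume $M:=\ird\psi=1$, i.e.\ $\psi\in\prob\cap C_c^\infty(\R^d)$, and recover the general mass at the very end by homogeneity. For $\lambda>0$ set $\rho_\lambda(x):=\lambda^{-d}\psi(x/\lambda)$, which again lies in $\prob\cap C_c^\infty(\R^d)$. A change of variables gives the scaling identities
\begin{equation*}
\Em[\rho_\lambda]=\lambda^{d(1-m)}\,\Em[\psi]=-\frac{\lambda^{d(1-m)}}{1-m}\ird\psi^m,\qquad I_k[\rho_\lambda]=\lambda^{k}I_k[\psi],
\end{equation*}
so that $\mF_{m,k}[\rho_\lambda]=-a\lambda^{\alpha}+b\lambda^{\beta}$ with $\alpha:=d(1-m)$, $\beta:=k$, $a:=\frac{1}{1-m}\ird\psi^m>0$ and $b:=\frac{1}{2k}I_k[\psi]>0$. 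Note that $0<m<1$ together with $k>\tfrac{d(1-m)}{m}$ forces $\beta>\alpha$.

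Next I would extract from Theorem~\ref{thm energy bounded below} that $I:=\inf_{\prob}\mF_{m,k}$ is finite, and observe that $I<0$: since $\beta>\alpha$ we have $\mF_{m,k}[\rho_\lambda]=-a\lambda^{\alpha}+b\lambda^{\beta}\to 0^{-}$ as $\lambda\to 0^{+}$, hence $I\le\mF_{m,k}[\rho_\lambda]<0$ for $\lambda$ small, and I write $|I|=-I>0$. Because $\rho_\lambda\in\prob$ for every $\lambda>0$, Theorem~\ref{thm energy bounded below} gives $-a\lambda^{\alpha}+b\lambda^{\beta}\ge -|I|$, equivalently
\begin{equation*}
a\le b\,\lambda^{\beta-\alpha}+|I|\,\lambda^{-\alpha}\qquad\text{for all }\lambda>0.
\end{equation*}
Minimizing the right-hand side over $\lambda>0$ — the minimum being attained at $\lambda_*$ with $\lambda_*^{\beta}=\frac{\alpha|I|}{(\beta-\alpha)b}$ — yields
\begin{equation*}
a\le \frac{\beta}{\beta-\alpha}\Big(\frac{\beta-\alpha}{\alpha}\Big)^{\alpha/\beta}|I|^{1-\alpha/\beta}\,b^{\alpha/\beta}.
\end{equation*}

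It then remains to unwind this bound. Substituting the expressions for $a$ and $b$, solving for $I_k[\psi]$ and raising to the power $\beta/\alpha=\tfrac{k}{d(1-m)}$ gives \eqref{HLS inequality} in the unit-mass case; replacing $\psi$ by $\psi/M$ and using the degree-$2$ homogeneity restores the factor $M^{2-\frac{mk}{d(1-m)}}$. Matching the resulting constant with the stated $C_0$ is pure bookkeeping, using the elementary identity $\big(\tfrac{\alpha}{\beta}\big)^{\alpha/(\beta-\alpha)}-\big(\tfrac{\alpha}{\beta}\big)^{\beta/(\beta-\alpha)}=\tfrac{\beta-\alpha}{\beta}\big(\tfrac{\alpha}{\beta}\big)^{\alpha/(\beta-\alpha)}$ to recognize the numerator in the definition of $C_0$. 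There is no genuine analytic obstacle here — all the substance is already packaged in Theorem~\ref{thm energy bounded below} — and the only points demanding a little care are verifying $I<0$ (so that $|I|>0$ and the formula for $C_0$ is meaningful) and keeping track of the exponents through the rescaling and the one-dimensional minimization in $\lambda$.
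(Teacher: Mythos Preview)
Your proof is correct and is essentially the same as the paper's: normalize $\psi$ to a probability measure, apply the free energy lower bound of Theorem~\ref{thm energy bounded below} to the one-parameter rescaling family $\rho_\lambda(x)=\lambda^{-d}\psi(x/\lambda)$, and optimize in the scaling parameter. The only cosmetic difference is that the paper first computes $\min_r\mF_{m,k}[\psi_r]$ and then compares with $\inf_{\prob}\mF_{m,k}$, whereas you first impose the bound $-a\lambda^\alpha+b\lambda^\beta\ge I$ and then minimize the rearranged right-hand side in $\lambda$; your explicit check that $I<0$ is a nice touch the paper leaves implicit.
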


\begin{proof}
Given $r>0$, we consider the family of probability measures $\psi_r=r^{-d}\frac{\psi(x/r)}{\|\psi\|_1}$. Using Theorem~\ref{thm energy bounded below}, we have 
\begin{equation}\label{ineq-crucial}
\begin{array}{rl}
\displaystyle \inf_{\prob}\mF_{m,k}
&\displaystyle\le \inf_{r\in(0,\infty)}\mF_{m,k}[\psi_r]
= \inf_{r\in(0,\infty)}r^{d(1-m)}\Em[\psi_1]+\frac{r^k}{2k} I_k[\psi_1]\\
&\displaystyle=-C_1(m,k)|\Em[\psi_1]|^{\frac{k}{k-d(1-m)}} I_k[\psi_1]^{-\frac{d(1-m)}{k-d(1-m)}},
\end{array}
\end{equation}
where we have used that the hypothesis $k>d(1-m)/m>d(1-m)$ and
\begin{equation*}
C_1(m,k)=\left[\left(\frac{d(1-m)}{k}\right)^\frac{d(1-m)}{k-d(1-m)}-\left(\frac{d(1-m)}{k}\right)^\frac{k}{k-d(1-m)}\right] (2k)^{\frac{d(1-m)}{k-d(1-m)}}.
\end{equation*}
Therefore,
\begin{equation*}
\left|\inf_{\prob}\mF_{m,k}\right|I_k[\rho_1]^{\frac{d(1-m)}{k-d(1-m)}}\ge C_1(m,k)|\Em[\psi_1]|^{\frac{k}{k-d(1-m)}}.
\end{equation*}
Using the definition of $\Em$ and of $I_k$, we get
\begin{align*}
\left(\ird |\psi(x)|\;dx \right)^{-2}\!\!\!\ird \ird |x-y|^k\psi(x)\psi(y)&\;dxdy \ge\\
\left(\frac{C_1(m,k)}{\left|\inf_{\prob}\mF_{m,k}\right|}\right)^{\frac{k-d(1-m)}{d(1-m)}}\!\!\!&\left(\ird \frac{|\psi(x)|^{m}}{1-m}\;dx \right)^{\frac{k}{d(1-m)}}\!\!\left(\ird |\psi(x)|\;dx \right)^{-\frac{mk}{d(1-m)}}.
\end{align*}
Re-arranging terms we obtain the desired reversed HLS inequality \eqref{HLS inequality}.
\end{proof}

\begin{remark}\label{optimizers}
Optimizers of the reversed HLS inequality \eqref{HLS inequality} are equivalent to minimizers of the free energy $\mF_{m,k}$ due to the first step of the proof above \eqref{ineq-crucial}.
\end{remark}

\begin{remark}
For any $0<m<p<1$, we have by H\"older's inequality
\begin{equation*}
\ird |\psi|^p\;dx\le\left(\ird |\psi|\;dx\right)^{\frac{p-m}{1-m}} \left(\ird |\psi|^m\;dx\right)^{m\frac{1-p}{1-m}}.
\end{equation*}
Hence, by changing the homogeneities, we can replace the $L_1$ and $L_m$ integrals on the right hand side of \eqref{HLS inequality} by any $L_p$ and $L_q$ integral, with $p$, $q$ satisfying $m<p,q<1$.
\end{remark}

%

\section{Optimizers}
In this section, we show the existence of optimizers in $\prob$ and we derive conditions for the optimizers to be absolutely continuous.
\begin{theorem}\label{main theorem}
Given $0<m<1$ and $k>\frac{d(1-m)}{m}$ or equivalently $m>\frac{d}{k+d}$ with $k>0$, then for any $\chi>0$ there exists $\rho_\infty\in\prob$ with bounded $k$-th moment, such that
\begin{equation*}
\mF_{m,k}[\rho_\infty]=\inf_{\rho\in\prob}\mF_{m,k}[\rho].
\end{equation*}
They also coincide with the optimizers of the reversed HLS inequality \eqref{HLS inequality}. Moreover, there exists $a\in(0,1]$ and $\mu_\infty\in L^1(\R^d)\cap \prob$ radially symmetric and non-increasing, such that 
\begin{equation*}
\rho_\infty=(1-a)\delta_0+a\mu_\infty,
\end{equation*}
Furthermore, if $a\in(0,1)$, then the optimizer satisfies the Euler-Lagrange condition
\begin{equation}\label{EL condition}
a^{m}\frac{m}{m-1}\mu_\infty^{m-1}(x)+a^2\ird \frac{|x-y|^k}{k} \mu_\infty(y) \,dy+\frac{(1-a)a}{k}|x|^k=\frac{a^2}{k}J_k[\mu_\infty]\,,
\end{equation}
for any $x\in\R^d$. If $a=1$, then there exists $C\in\R$ such that
\begin{equation}\label{EL for a=1}
\frac{m}{m-1}\rho_\infty^{m-1}(x)+\ird \frac{|x-y|^k}{k} \rho_\infty(y) \,dy=C\,,
\end{equation}
for any $x\in\R^d$.
\end{theorem}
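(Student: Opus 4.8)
The plan is to combine the direct method in the space $\prob$ equipped with a suitable compactness notion with the rearrangement monotonicity already established, followed by a derivation of the Euler--Lagrange conditions by an internal-variations argument. First I would reduce to radially symmetric non-increasing competitors: by the rearrangement lemma, $\mF_{m,k}[\rho^*]\le\mF_{m,k}[\rho]$, and $J_k$ controls $I_k$ via \eqref{IJbound} on this class, so a minimizing sequence $(\rho_n)$ may be taken radial non-increasing. Theorem~\ref{thm energy bounded below} gives $\inf_{\prob}\mF_{m,k}>-\infty$; since $\Em\le 0$ on $\prob$, along a minimizing sequence $I_k[\rho_n]$ is bounded, hence by \eqref{IJbound} so is $J_k[\rho_n]$, giving tightness. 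Thus $\rho_n\rightharpoonup\rho_\infty$ (narrowly, up to subsequence) with $\rho_\infty\in\prob$ a probability measure and $J_k[\rho_\infty]$ finite by weak lower semicontinuity of $J_k$. Lower semicontinuity of $\Em$ (by its very definition as a relaxed functional) and of $I_k$ then yields $\mF_{m,k}[\rho_\infty]\le\liminf\mF_{m,k}[\rho_n]=\inf_{\prob}\mF_{m,k}$, so $\rho_\infty$ is a minimizer; equivalence with optimizers of the reversed HLS inequality is Remark~\ref{optimizers}.

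Next I would analyze the structure of $\rho_\infty$. Being a narrow limit of radial non-increasing measures, $\rho_\infty$ is itself radially symmetric and non-increasing, so its singular part can only be a Dirac mass at the origin: write $\rho_\infty=(1-a)\delta_0+a\mu_\infty$ with $a\in[0,1]$ and $\mu_\infty\in L^1\cap\prob$ radial non-increasing (the absolutely continuous part, renormalized). The case $a=0$ is excluded because $\Em[\delta_0]=0$ while $I_k[\delta_0]=0$ too, so $\mF_{m,k}[\delta_0]=0$; since by scaling $\inf_{\prob}\mF_{m,k}<0$ (as in \eqref{ineq-crucial}, using $k>d(1-m)$), $\delta_0$ is not a minimizer, forcing $a\in(0,1]$.

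Finally, the Euler--Lagrange conditions come from perturbing within $\prob$. For $a\in(0,1)$, I would fix the Dirac part and the mass $a$ and vary $\mu_\infty$: for $\mu_\infty+\eps(\nu-\mu_\infty)$ with $\nu\in L^1\cap\prob$, expanding $\mF_{m,k}[(1-a)\delta_0+a(\mu_\infty+\eps(\nu-\mu_\infty))]$ to first order in $\eps$ and using that $\eps=0$ is a minimum gives $\int\bigl(a^m\tfrac{m}{m-1}\mu_\infty^{m-1}+a^2\int\tfrac{|x-y|^k}{k}\mu_\infty(y)\,dy+\tfrac{(1-a)a}{k}|x|^k\bigr)(\nu-\mu_\infty)\ge 0$ for all admissible $\nu$; since this holds for all probability densities $\nu$, the bracket equals a constant on $\{\mu_\infty>0\}$ and is $\ge$ that constant elsewhere, and a standard argument (using that $\mu_\infty$ is radial non-increasing, hence strictly positive on a ball whose complement is where $\mu_\infty=0$ but there $\mu_\infty^{m-1}=+\infty$ since $m<1$) promotes this to the pointwise identity \eqref{EL condition}, with the constant evaluated by integrating against $\mu_\infty$ to get $\tfrac{a^2}{k}J_k[\mu_\infty]$ after using $\int\mu_\infty^m$ cancellation—more precisely one reads off the constant from the $|x|^k$ and convolution terms. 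For $a=1$ one varies $\rho_\infty$ directly among probability densities and obtains \eqref{EL for a=1} with an undetermined constant $C$. The main obstacle I expect is the rigorous passage from the variational inequality to the pointwise Euler--Lagrange equation when $m-1<0$ makes $\mu_\infty^{m-1}$ blow up where $\mu_\infty$ vanishes: one must argue that the "contact set" $\{\mu_\infty>0\}$ is exactly a ball (by radial monotonicity) and that finiteness of $\Em[\mu_\infty]$ forces the complement to be treated correctly, so that the inequality on $\{\mu_\infty=0\}$ is automatically consistent; handling the admissible perturbations so that $\mu_\infty+\eps(\nu-\mu_\infty)\ge 0$ and has the right integrability also requires care, typically by choosing $\nu$ compactly supported in $\{\mu_\infty>0\}$ first and then enlarging.
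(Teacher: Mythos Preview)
Your tightness argument has a gap. From $\Em\le 0$ and $\mF_{m,k}[\rho_n]$ bounded you conclude that $I_k[\rho_n]$ is bounded, but this does not follow: the upper bound $\mF_{m,k}[\rho_n]\le\inf\mF_{m,k}+1$ only yields $\tfrac{1}{2k}I_k[\rho_n]\le\inf\mF_{m,k}+1-\Em[\rho_n]$, and $-\Em[\rho_n]=|\Em[\rho_n]|$ is not a priori bounded (spread-out densities make $\int\rho^m$ arbitrarily large). The paper closes this precisely by invoking Proposition~\ref{prop:bound} a second time with $\chi=(4k)^{-1}$: for radial decreasing $\rho$ one has, via \eqref{IJbound},
\[
\mF_{m,k}[\rho]\ \ge\ \Em[\rho]+\tfrac{1}{2k}J_k[\rho]\ \ge\ C((4k)^{-1},m,k,d)+\tfrac{1}{4k}J_k[\rho],
\]
and this coercivity is what gives the uniform moment bound and hence tightness. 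Your plan uses only the boundedness from below (Theorem~\ref{thm energy bounded below}), which is not enough.

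A second issue is the identification of the constant in \eqref{EL condition} when $a\in(0,1)$. Your perturbation $\mu_\infty\mapsto\mu_\infty+\eps(\nu-\mu_\infty)$ keeps both $a$ and the Dirac mass fixed; this yields the Euler--Lagrange identity only up to an undetermined Lagrange multiplier $C$, and integrating against $\mu_\infty$ gives $C=ma^m\Em[\mu_\infty]+\tfrac{a^2}{k}I_k[\mu_\infty]+\tfrac{(1-a)a}{k}J_k[\mu_\infty]$, not $\tfrac{a^2}{k}J_k[\mu_\infty]$. The paper instead perturbs by transferring mass between the Dirac and the absolutely continuous part,
\[
\rho_\eps=(1-a(1+\eps))\delta_0+a(1+\eps\phi)\mu_\infty,\qquad \int\phi\,d\mu_\infty=1,
\]
and the derivative at $\eps=0$ then carries the extra term $-\tfrac{a^2}{k}J_k[\mu_\infty]$ coming from the variation of the cross term $\tfrac{(1-a)a}{k}J_k$, which pins down the constant directly. (Your approach can be salvaged by additionally using stationarity in $a$ and combining the two relations, but that is not what you wrote.) Apart from these two points the outline matches the paper's argument.
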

\begin{remark}
The free energy $\mF_{m,k}$ is geodesically convex in the 2-Wasserstein metric if $m>1-\frac{1}{d}$ and $k\ge1$. This implies the uniqueness of minimizers.
\end{remark}
\begin{proof}
We first notice that from Proposition~\ref{prop:bound} and the relationship \eqref{IJbound} between the interaction energy and potential energy there exists $C((4k)^{-1},m,k,d)$, such that for any $\rho$ radially symmetric and decreasing
\begin{equation*}
    \mF_{m,k}(\rho)\ge C((4k)^{-1},m,k,d)+\frac{1}{4k} \ird |x|^k\; d\rho(x).
\end{equation*}

Because the energy decreases under radial decreasing rearrangement, we can considering a minimizing sequence $\{\rho_n\}_{n\in\N}\subset\prob$ which is smooth and radially decreasing. Using the mass constraint and the radially decreasing property we obtain the uniform pointwise bound
\begin{equation}\label{uniform point bound}
    \rho_n(x)\le (\omega_d |x|)^{-d}.
\end{equation}
As $\{\rho_n\}_{n\in\N}$ is a minimizing sequence, we have
\begin{equation*}
  \lim_{n\to\infty}\mF_{m,k}(\rho_n)=\inf_{\rho\in\prob}\mF_{m,k}<\infty,
\end{equation*}
which implies that for every $n$ large enough $\mF_{m,k}(\rho_n)\le \inf_{\rho\in\prob}\mF_{m,k}+1$. Hence, for $n$ large enough
\begin{equation*}
    \frac{1}{4k} \ird |x|^k\; d\rho_n(x)\le \inf_{\rho\in\prob}\mF+C((4k)^{-1},m,k,d)+1.
\end{equation*}
Therefore, $\{\rho_n\}_{n\in\N}$ is tight, which implies by Prokhorov's that there exists $\rho_\infty\in\prob$ with $k$-th moment bounded and subsequence $\{\rho_{n_i}\}_{i\in\N}$, such that $\rho_{n_i}\rightharpoonup\rho_\infty$. By lower-semicontinuity of $\mF$, we get
\begin{equation*}
    \mF_{m,k}(\rho_\infty)\le \liminf_{i\to\infty}\mF_{m,k}(\rho_{n_i})=\inf_{\rho\in\prob}\mF_{m,k}.
\end{equation*}
Hence, we conclude 
\begin{equation*}
    \mF_{m,k}(\rho_\infty)=\inf_{\rho\in\prob}\mF_{m,k}.
\end{equation*}
Using that $\{\rho_n\}_{n\in\N}$ is radially decreasing and the pointwise bound \eqref{uniform point bound}, we get that there exist $a\in[0,1]$ and $\mu_\infty\in \prob\cap L^1(\R^d)$ radially symmetric and decreasing such that
\begin{equation*}
    \rho_\infty=(1-a)\delta_0+a \mu_\infty.
\end{equation*}
Using this decomposition we can re-write the energy of $\rho_\infty$ as
\begin{equation}\label{energyofrhoinfty}
  \mF_{m,k}(\rho_\infty)=a^m\Em[\mu_\infty]+\frac{a^2}{2k}I_k[\mu_\infty]+\frac{a(1-a)}{k}J_k[\mu_\infty].
\end{equation}
The case $a=0$ is discarded, because we can always find a positive radius such that the scaled indicator of the ball of that radius has negative energy, and thus it is a better competitor than the Dirac Delta.

Next, we show that $\supp(\mu_\infty)=\R^d$. Due to the optimality of $\rho_\infty$ for the decomposed free energy \eqref{energyofrhoinfty}, we get that
\begin{equation*}
\mu_\infty=\argmin_{\nu\in\prob} \left\{ a^m\Em[\nu]+\frac{a^2}{2k}I_k[\nu]+\frac{a(1-a)}{k}J_k[\nu] \right\}.
\end{equation*}
Hence, taking variations like in \cite{BCLR2,CDM,CCH1,CDP}, we get that there exists $C\in \R$, such that
\begin{equation}\label{EL1}
\begin{cases}
\displaystyle ma^m\frac{\mu_\infty^{m-1}(x)}{m-1}+\frac{(1-a)a}{k}|x|^k+a^2\ird \frac{|x-y|^k}{k} \mu_\infty(y)\,dy=C&\mbox{on $\supp(\mu_\infty)$}\\[4mm]
\displaystyle ma^m\frac{\mu_\infty^{m-1}(x)}{m-1}+\frac{(1-a)a}{k}|x|^k+a^2\ird \frac{|x-y|^k}{k} \mu_\infty(y)\,dy\ge C&\mbox{on $\R^d$}.
\end{cases}
\end{equation}
By using that $\mu_\infty$ has finite $k$-th moment and the second condition in \eqref{EL1}, it follows that $\mu_\infty$ can not vanish. If $a=1$, then \eqref{EL for a=1} follows.

If we assume that $a\in (0,1)$, we can consider the following type of perturbations.  Given $\phi\in C^\infty_c(\R^d)$ satisfying $\ird\phi\;d\mu=1$, we have that for any $\epsilon<\|\phi\|_\infty$
\begin{equation*}
	\rho_\epsilon=(1-a(1+\epsilon))\delta_0+a(1+\epsilon\phi)\mu_\infty\in\prob.
\end{equation*}
Moreover, $\rho_\epsilon\rightharpoonup\rho_\infty$ as $\epsilon\to0$. Because, $\rho_\infty$ is a minimizer we get that
\begin{equation*}
\lim_{\epsilon\to 0}\frac{\Em[\rho_\epsilon]-\Em[\rho_\infty]}{\epsilon}=0,
\end{equation*}
if the limit exists. Replacing,
\begin{align*}
\Em[\rho_\epsilon]-\Em[\rho_\infty]=\,& \,
a^m\ird((1+\epsilon\phi)^m-1) \frac{\mu_\infty^m(x)}{m-1}\,dx\\&+\frac{a}{k}\left((1-a-\epsilon a)\ird|x|^k(1+\epsilon\phi)\mu_\infty-(1-a)\ird |x|^k\mu_\infty(x)\,dx \right)\\
&+\frac{a^2}{2k}\ird\ird |x-y|^k \big( (1+\epsilon\phi(x))(1+\epsilon\phi(y))-1\big)\mu_\infty(x)\mu_\infty(y)\,dy\,dx\,.
\end{align*}
Hence, dividing by $\epsilon$ and taking the limit in the previous equation, we get the identity 
$$
\ird \!\left(ma^m\frac{\mu_\infty^{m-1}(x)}{m-1}+\frac{(1-a)a}{k}|x|^k+a^2\ird \frac{|x-y|^k}{k} \mu_\infty(y)\,dy-\frac{a^2}{k}J_k[\mu_\infty]\right)\phi(x) \mu_\infty(x)\,dx=0
$$
for any $\phi\in C^\infty_c(\R^d)$ with $\ird \phi\;d\mu_\infty=1$. Therefore, we obtain that for every $x\in\R^d$
\begin{equation*}
\frac{a^2}{k}J_k[\mu_\infty]=ma^m\frac{\mu_\infty^{m-1}(x)}{m-1}+\frac{(1-a)a}{k}|x|^k+a^2\ird \frac{|x-y|^k}{k} \mu_\infty(y)\,dy,
\end{equation*}
which is the desired \eqref{EL condition}.

Finally, let us point out that the existence of minimizers of the free energy $\mF_{m,k}$ is equivalent to the existence of optimizers of the reversed HLS inequality \eqref{HLS inequality} as discussed in Remark \ref{optimizers}.
\end{proof}

A similar proof gives the optimizers for rescaled energies.

\begin{corollary}\label{thm:minresc}
Given $m\in(d/(2+d),1)$, $k>0$, we consider 
\begin{equation*}
\mFr(\rho)=\Em[\rho]+ \frac{1}{2k}I_k[\rho] +\frac{1}{2}J_2[\rho].
\end{equation*}
Then, there exists $\rho_\infty\in\prob$ with bounded $\max(k,2)$-th moment, such that
\begin{equation*}
\mF_{m,k}[\rho_\infty]=\inf_{\rho\in\prob}\mF_{m,k}[\rho]>-\infty.
\end{equation*}
\end{corollary}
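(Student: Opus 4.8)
The plan is to run the same direct-method argument as in the proof of Theorem~\ref{main theorem}, with the quadratic moment $\tfrac12 J_2$ now taking over the confining role played there by the interaction energy.

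First I would check that $\mFr$ is bounded below. The hypothesis $m\in(d/(d+2),1)$ is exactly equivalent to $2>\tfrac{d(1-m)}{m}$, so Proposition~\ref{prop:bound} applies with the exponent ``$k$'' there replaced by $2$ and with $\chi=\tfrac14$, yielding $\Em[\rho]+\tfrac14 J_2[\rho]\ge C(\tfrac14,m,2,d)$ for every $\rho\in\prob$. Since $I_k[\rho]\ge0$, this gives
\[
\mFr[\rho]\ \ge\ C\big(\tfrac14,m,2,d\big)+\tfrac14 J_2[\rho]+\tfrac1{2k}I_k[\rho]\ \ge\ C\big(\tfrac14,m,2,d\big)>-\infty,
\]
so the infimum is finite.

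Next, for existence, I would take a minimizing sequence $\{\rho_n\}$. Since $\Em$ is invariant under radially decreasing rearrangement while $I_k$ (the rearrangement inequality $I_k[\rho^*]\le I_k[\rho]$ of Section~2) and $J_2$ (same layer-cake argument, using $\int_{B_r}\rho^*\ge\int_{B_r}\rho$) decrease under it, I may assume each $\rho_n$ is smooth, radially symmetric and non-increasing. For $n$ large one has $\mFr[\rho_n]\le\inf_{\prob}\mFr+1$, so the lower bound above forces $J_2[\rho_n]$ to stay uniformly bounded, and then $I_k[\rho_n]\ge J_k[\rho_n]$ from \eqref{IJbound} forces $J_k[\rho_n]$ to be uniformly bounded as well. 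Markov's inequality then gives tightness, Prokhorov's theorem extracts a subsequence $\rho_{n_i}\rightharpoonup\rho_\infty$ in $\prob$, and weak lower semicontinuity of $\Em$, $I_k$ and $J_2$ (all recalled in Section~2) gives $\mFr[\rho_\infty]\le\liminf_i\mFr[\rho_{n_i}]=\inf_{\prob}\mFr$, so $\rho_\infty$ is a minimizer. Applying lower semicontinuity separately to $J_2$ and to $J_k$ shows both $J_2[\rho_\infty]$ and $J_k[\rho_\infty]$ are finite, and since $\max(k,2)\in\{k,2\}$ this is precisely the asserted bounded $\max(k,2)$-th moment.

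I do not expect a genuine obstacle: the only points needing a word of justification are that $J_2$ decreases under radially decreasing rearrangement and that $m>d/(d+2)$ is the sharp condition making the dyadic estimate of Proposition~\ref{prop:bound} usable with the weight $|x|^2$, both routine. No finer structure (the Euler--Lagrange system, the splitting $\rho_\infty=(1-a)\delta_0+a\mu_\infty$) is needed here, since the statement only claims existence of a minimizer with a finite moment.
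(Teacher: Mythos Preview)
Your proposal is correct and is precisely the adaptation the paper has in mind: the paper gives no details beyond ``A similar proof gives the optimizers for rescaled energies,'' and your argument is the natural implementation of that, with Proposition~\ref{prop:bound} applied at exponent $2$ (so that the hypothesis $m>d/(d+2)$ is exactly what is needed) and the $\tfrac12 J_2$ term supplying the confinement that $I_k$ provided in Theorem~\ref{main theorem}. The only extra ingredient you flag---that $J_2$ decreases under radially decreasing rearrangement---is indeed routine, and your handling of the $\max(k,2)$-th moment via separate lower semicontinuity of $J_2$ and $J_k$ is fine.
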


\begin{remark}
If a minimizers of $\mFr$ is regular, then using a Barenblatt rescaling we obtain self similar solutions to
\begin{equation*}
\partial_t\mu =\frac{m}{m-1}\nabla\cdot\left(\mu\left(\nabla \mu^{m-1}-\nabla\frac{|x|^k}{k}*\mu\right)\right).
\end{equation*}
For a more complete discussion see \cite{CCH1}.
\end{remark}

\subsection{Regularity of Minimizers}
Here we analyze the restriction in the parameter space that prevent that minimizers condensate.
\begin{proposition}
Under the hypothesis of Theorem~\ref{main theorem}. If $k>0$ and $a\in(0,1)$, then there exists $C(a,k,m,\mu_\infty)>0$, such that for any $x\in B_1$
\begin{equation}\label{asymptotic mu infty}
\frac{C}{ |x|^{\frac{\min\{2,k\}}{1-m}}}\le \mu_\infty(x).
\end{equation}
In particular, if $d(1-m)<2$ or equivalently $m>\frac{d-2}d$, then every optimizer is bounded. For $m>\frac{d-1}{d}$ and $k\geq 1$ the minimizer is unique and bounded.
\end{proposition}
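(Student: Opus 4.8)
The plan is to read off the pointwise behaviour of $\mu_\infty$ (resp. $\rho_\infty$) from the Euler--Lagrange identities \eqref{EL condition} and \eqref{EL for a=1}, the decisive input being a second-order Taylor estimate for the interaction potential that uses the even symmetry of the optimizer.

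\emph{Step 1: a Taylor-type bound on the potential.} First I would prove that there is $C_0=C_0(k,d,J_k[\nu])$ such that, for every radially symmetric $\nu\in\prob$ with finite $k$-th moment and every $x\in B_1$,
\[
\ird\big(|x-y|^k-|y|^k\big)\,\nu(y)\,dy\ \le\ C_0\,|x|^{\min\{2,k\}} .
\]
For $0<k\le1$ this is immediate from the subadditivity $|x-y|^k\le|x|^k+|y|^k$. For $k\ge2$, writing $\phi(t)=\ird|y-tx|^k\,\nu(y)\,dy$, the derivative $\phi'(0)=-k\,x\cdot\ird|y|^{k-2}y\,\nu(y)\,dy$ vanishes by evenness of $\nu$, and $|\phi''(s)|\le C_k|x|^2\ird(|y|+1)^{k-2}\nu(y)\,dy\le C_0|x|^2$ because a finite $k$-th moment controls the $(k-2)$-th moment; hence $|\phi(1)-\phi(0)|\le\tfrac12\sup_{[0,1]}|\phi''|\le C_0|x|^2$. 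For $1<k<2$ I would split the integral: on $\{|y|<2|x|\}$ one has $|x-y|^k-|y|^k\le(3|x|)^k$ while the mass is $\le1$; on $\{|y|\ge2|x|\}$ the whole segment $[y-x,y]$ stays outside $B_{|x|}$, so the same one-variable expansion applies there, the first-order term being rewritten via evenness as an integral over $\{|y|<2|x|\}$ of size $\le C|x|^k$, and the quadratic remainder being $\le C_k|x|^2\cdot|x|^{k-2}=C_k|x|^k$ since $|y-sx|\ge|x|$. I expect this to be the main obstacle: the sharp exponent $\min\{2,k\}$ for $1<k<2$ really needs the first-order cancellation, and one must track that the constant depends only on $k$, $d$ and the $k$-th moment.

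\emph{Step 2: the lower bound \eqref{asymptotic mu infty}.} When $a\in(0,1)$, solving \eqref{EL condition} for $\mu_\infty^{m-1}$ gives
\[
\mu_\infty^{m-1}(x)=\frac{1-m}{m\,a^{m}}\left(\frac{a^{2}}{k}\ird\big(|x-y|^k-|y|^k\big)\mu_\infty(y)\,dy+\frac{(1-a)a}{k}|x|^k\right)>0 ,
\]
and, since $|x|^k\le|x|^{\min\{2,k\}}$ on $B_1$, Step 1 bounds the right-hand side by $\widetilde C\,|x|^{\min\{2,k\}}$ with $\widetilde C=\widetilde C(a,k,m,\mu_\infty)>0$. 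As $m-1<0$, the map $s\mapsto s^{1/(m-1)}$ is decreasing on $(0,\infty)$, so raising both sides to the power $1/(m-1)$ yields $\mu_\infty(x)\ge\widetilde C^{1/(m-1)}|x|^{-\min\{2,k\}/(1-m)}$, i.e. \eqref{asymptotic mu infty} (the bound being trivial at $x=0$ by radial monotonicity).

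\emph{Step 3: boundedness when $d(1-m)<2$, and the final claim.} Under the hypotheses of Theorem~\ref{main theorem} one has $k>d(1-m)/m>d(1-m)$, so $k/(1-m)>d$; with $d(1-m)<2$, i.e. $2/(1-m)>d$, this gives $\min\{2,k\}/(1-m)>d$, hence $|x|^{-\min\{2,k\}/(1-m)}\notin L^1(B_1)$. If $a\in(0,1)$, this contradicts $\mu_\infty\in L^1$ by \eqref{asymptotic mu infty}; therefore $a=1$ and $\rho_\infty=\mu_\infty$ is a radially non-increasing function in $L^1\cap\prob$. It remains to see $\rho_\infty$ is bounded, for which it suffices that $\rho_\infty(0)<\infty$ (then $\|\rho_\infty\|_\infty=\rho_\infty(0)$ by monotonicity). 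If $\rho_\infty(0)=+\infty$, then letting $x\to0$ in \eqref{EL for a=1} (using that $|\cdot|^k*\rho_\infty$ is finite and continuous, by the finite $k$-th moment, while $\rho_\infty^{m-1}(x)\to0$) forces the constant to be $J_k[\rho_\infty]/k$, so $\rho_\infty^{m-1}(x)=\frac{1-m}{mk}\ird(|x-y|^k-|y|^k)\rho_\infty(y)\,dy$; applying Step 1 and inverting as in Step 2 gives $\rho_\infty(x)\gtrsim|x|^{-\min\{2,k\}/(1-m)}$ near $0$, again contradicting $\rho_\infty\in L^1$. Hence $\rho_\infty$ is bounded, which in particular covers $d=1,2$. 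Finally, if $m>\frac{d-1}{d}$ and $k\ge1$ then $d(1-m)<1<2$, so $\rho_\infty$ is bounded by the above, and uniqueness is exactly the geodesic convexity statement in the remark following Theorem~\ref{main theorem}.
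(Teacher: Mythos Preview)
Your proof is correct and follows essentially the same route as the paper: extract $\mu_\infty^{m-1}$ from the Euler--Lagrange identity, bound $\int(|x-y|^k-|y|^k)\,d\nu$ by $C|x|^{\min\{2,k\}}$ via a Taylor argument exploiting the radial symmetry (cancelling the first-order term), invert to get the pointwise lower bound, and then derive an $L^1$ contradiction. The only notable difference is in the intermediate regime $1<k<2$: the paper bounds the one-variable second derivative globally by $k|x|^k\,|s-|y|/|x||^{k-2}$ and integrates in $s$, whereas you split into $\{|y|<2|x|\}$ and $\{|y|\ge 2|x|\}$; both yield the $|x|^k$ bound, and your Step~3 in fact spells out the boundedness argument for the case $a=1$ more explicitly than the paper's one-line appeal to \eqref{EL for a=1}.
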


\begin{proof}
Assuming that $a\in(0,1)$, from \eqref{EL condition} we get
\begin{equation}\label{mu infty}
\mu_\infty(x)=\left(\frac{1-m}{kma^m}\left(a^2\ird \left(|x-y|^k-|y|^k\right) \mu_\infty(y)\;dy+(1-a)a|x|^k\right)\right)^{-\frac{1}{1-m}}.
\end{equation}

If $k\ge 2$, we consider the auxiliary function $g:\R^d\to \R$, given by $g(z)=|z|^k$. By Taylor's expansion, we get that for any $x\in B_1$
\begin{align*}
|x-y|^k-|y|^k&\displaystyle=g(y-x)-g(y)\le-\nabla g(y)\cdot x+ \frac{1}{2}\sup_{z\in B_1(y)}D^2g(z)[x,x]\\
&\le -k y\cdot x |y|^{k-2}+C_k(|y|^{k-2}+1)|x|^2.
\end{align*}
Integrating, using the radial symmetry of $\mu_\infty$ and that the $k-2$ moment is bounded, we get
\begin{align*}\ird \left(|x-y|^k-|y|^k\right) \mu_\infty(y)\;dy&\displaystyle\le \ird  \left(-k y\cdot x |y|^{k-2}+C_k(|y|^{k-2}+1)|x|^2\right)\mu_\infty(y)\;dy\\
&= C_k (I_{k-2}[\mu_\infty]+1)|x|^2.
\end{align*}
Then, using \eqref{mu infty} we can conclude that there exists $C(a,m,k,\mu_\infty)>0$, such that 
\begin{equation*}
\mu_\infty(x)\ge \frac{C(a,m,k,\mu_\infty)}{(|x|^2+|x|^k)^{\frac{1}{1-m}}},
\end{equation*}
for any $x\in B_1$, and \eqref{asymptotic mu infty} for $k\ge 2$ follows.

If $1\le k<2$, we consider the auxiliary function $h:[0,1]\to \R$, given by $h(t)=|tx-y|^k-|y|^k$. Then, by Taylor's formula we get
\begin{equation}\label{taylor h}
\begin{array}{rl}
\displaystyle|x-y|^k-|y|^k&\displaystyle=h(1)-h(0)=h'(0)+ \int_0^1h''(s)\;ds.
\end{array}
\end{equation}
Calculating the derivatives of $h$, we get
\begin{equation}\label{derivative h}
h'(0)=-k(x\cdot y)|y|^{k-2},\qquad h''(s)=k\big(|x|^2|sx-y|^{k-2}-(2-k)((sx-y)\cdot x)^2|sx-y|^{k-4} \big).
\end{equation}
In particular, we can see that
\begin{equation*}
h''(s)\le k\left|s-\frac{|y|}{|x|}\right|^{k-2}|x|^k.
\end{equation*}
Hence, integrating and using that $1<k\le 2$ we can see that
\begin{equation}\label{integral h''}
\int_0^1h''(s)\;ds\;\le \frac{2k}{k-1}|x|^k.
\end{equation}
Therefore, using \eqref{taylor h}, \eqref{derivative h}, \eqref{integral h''} and the radial symmetry of $\mu_\infty$ we get
\begin{equation*}
\ird |x-y|^k-|y|^k\le -\ird k(x\cdot y)|y|^{k-2}\;d\mu_\infty+\frac{2k}{k-1}|x|^k=\frac{2k}{k-1}|x|^k. 
\end{equation*}
From \eqref{mu infty} we can conclude that there exists $C(a,m,k)>0$, such that
\begin{equation}\label{growth at zero k}
\mu_\infty(x)\ge \frac{C(a,m,k)}{|x|^{\frac{k}{m-1}}}.
\end{equation}

If $0<k\le 1$, we can use sublinearity to show
\begin{equation*}
|x-y|^k-|y|^k\le |x|^k,
\end{equation*}
see for instance \cite[Lemma 2.8]{CDP}. It follows by the previous arguments that there exists $C(a,m,k)$ such that
\begin{equation*}
\mu_\infty\ge \frac{C(a,m,k)}{|x|^k}.
\end{equation*}

Next, if $0<k\le 2$, then from the hypothesis of Theorem~\ref{main theorem} we have $k>d(1-m)/m>d(1-m)$. Hence, if $a\in(0,1)$, \eqref{growth at zero k} implies the contradiction 
\begin{equation*}
\infty=C\int_{B_1}\frac{1}{|x|^{\frac{k}{1-m}}}\;dx\le \ird \mu_\infty(x)\;dx=1.
\end{equation*}
The case $d(1-m)\le 2$ and $k\ge 2$ is analogous. Therefore, we deduce that $\rho_\infty$ is absolutely continuous and $a=1$. Moreover, if a radially decreasing optimizer is not bounded, we can use \eqref{EL for a=1} to reach a contradiction with the fact that the optimizer $\rho_\infty$ has $k$-th moment bounded by Theorem \ref{main theorem}. 
\end{proof}

\begin{proposition}
Under the hypothesis of Theorem~\ref{main theorem}. If $0<k\le1$ or $ \frac{2^{k-1}}{2^k-1} k> \frac{d(1-m)}{m}$, then $\rho_\infty$ is absolutely continuous.
\end{proposition}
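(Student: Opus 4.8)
\emph{Strategy of the proof.} By Theorem~\ref{main theorem} write $\rho_\infty=(1-a)\delta_0+a\mu_\infty$ with $\mu_\infty\in L^1(\R^d)\cap\prob$ radially symmetric and non-increasing; the case $a=0$ being already excluded, it suffices to show that $a\in(0,1)$ contradicts each of the two hypotheses. Assume then $a\in(0,1)$. First I would record the facts that make the rest meaningful: the Dirac at the origin does not contribute to the $k$-th moment, so $J_k[\mu_\infty]=J_k[\rho_\infty]/a\in(0,\infty)$; hence $I_k[\mu_\infty]\in(0,\infty)$ and, since $\mu_\infty$ is radially decreasing, \eqref{IJbound} gives $r:=I_k[\mu_\infty]/J_k[\mu_\infty]\in[1,2\max\{1,2^{k-1}\}]$. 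Also $E:=\Em[\mu_\infty]\in(-\infty,0)$, the finiteness coming from $\mF_{m,k}[\rho_\infty]>-\infty$ and the decomposition \eqref{energyofrhoinfty}. Set $t:=\frac{mk}{d(1-m)}$, so that the standing hypothesis $k>\frac{d(1-m)}m$ is precisely $t>1$.

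The crux is to obtain two scalar identities from the optimality of $\rho_\infty$ by perturbing the mass split and the length scale simultaneously. For $b\in(0,1)$, $\lambda>0$, the measure $\nu_{b,\lambda}:=(1-b)\delta_0+b\,\lambda^{-d}\mu_\infty(\cdot/\lambda)$ lies in $\prob$, reduces to $\rho_\infty$ at $(b,\lambda)=(a,1)$, and (using that $\Em$ sees only the absolutely continuous part, together with the scalings of $I_k$ and $J_k$) satisfies
\[
\Psi(b,\lambda):=\mF_{m,k}[\nu_{b,\lambda}]=b^m\lambda^{d(1-m)}E+\frac{b^2\lambda^k}{2k}I_k[\mu_\infty]+\frac{b(1-b)\lambda^k}{k}J_k[\mu_\infty].
\]
This $\Psi$ is $C^1$ on $(0,1)\times(0,\infty)$ and attains its global minimum at the interior point $(a,1)$, so $\partial_b\Psi(a,1)=\partial_\lambda\Psi(a,1)=0$, i.e.
\[
a^m d(1-m)E+\tfrac{a^2}2\,I_k[\mu_\infty]+a(1-a)J_k[\mu_\infty]=0,
\]
\[
ma^{m-1}E+\tfrac ak\,I_k[\mu_\infty]+\tfrac{1-2a}k\,J_k[\mu_\infty]=0 .
\]
Eliminating $E$ and dividing by $J_k[\mu_\infty]>0$, a short manipulation (using $2-t=2(1-\tfrac t2)$) collapses these two identities into the single relation
\[
a\,(1-\tfrac t2)(r-2)=t-1 .
\]

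Now I would run the elementary casework forced by this relation, together with $t>1$, $a\in(0,1)$ and $r\in[1,R]$ where $R:=2\max\{1,2^{k-1}\}$. If $t\ge2$: here $1-\tfrac t2\le0$ while $t-1>0$, so one needs $r<2$ (and $t=2$ makes the relation read $0=1$); then $(1-\tfrac t2)(r-2)=(\tfrac t2-1)(2-r)\le\tfrac t2-1$ because $r\ge1$, which forces $a\ge\frac{2(t-1)}{t-2}>2$ — impossible. If $1<t<2$: one needs $r>2$, which already forces $R>2$, hence $k>1$ and $R=2^k$; moreover $a<1$ is equivalent to $r>\frac2{2-t}$, and an admissible $r\le 2^k$ satisfying this exists only if $\frac2{2-t}<2^k$, i.e. $t<2-2^{1-k}=\frac{2^k-1}{2^{k-1}}$, equivalently $\frac{2^{k-1}}{2^k-1}k<\frac{d(1-m)}m$. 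Therefore $a\in(0,1)$ can hold only when $k>1$ and $\frac{2^{k-1}}{2^k-1}k<\frac{d(1-m)}m$, which is excluded by each hypothesis of the Proposition (if $0<k\le1$ the condition $k>1$ fails; otherwise $\frac{2^{k-1}}{2^k-1}k>\frac{d(1-m)}m$ contradicts the inequality just derived). Hence $a=1$ and $\rho_\infty=\mu_\infty\in L^1(\R^d)$ is absolutely continuous.

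The main thing to be careful about is the preliminary bookkeeping — checking that $E$, $I_k[\mu_\infty]$, $J_k[\mu_\infty]$ are finite with $E<0$ and $J_k[\mu_\infty]>0$, so that $\Psi$ is genuinely $C^1$ with the stated coefficients and the elimination of $E$ is legitimate — and tracking the constant $2\max\{1,2^{k-1}\}$ from \eqref{IJbound} so that the threshold $t<2-2^{1-k}$ matches the stated inequality exactly. The rest is routine algebra.
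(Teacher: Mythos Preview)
Your proof is correct and follows essentially the same route as the paper: both assume $a\in(0,1)$, take first-order optimality conditions in the two-parameter family of competitors obtained by varying the mass split and the dilation, eliminate $\Em[\mu_\infty]$ to obtain a scalar relation between $a$ and $I_k[\mu_\infty]/J_k[\mu_\infty]$, and then use the bounds \eqref{IJbound} to reach a contradiction with the stated hypotheses. Your parametrization via $t=\tfrac{mk}{d(1-m)}$ and the compact identity $a(1-\tfrac t2)(r-2)=t-1$ is just the reciprocal of the paper's $\beta=\tfrac{d(1-m)}{mk}$; the casework and the resulting threshold $t<2-2^{1-k}$, equivalently $\tfrac{2^{k-1}}{2^k-1}k<\tfrac{d(1-m)}{m}$, coincide.
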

\begin{proof}
We know that $\rho_\infty=(1-a)\delta_0+a\mu_\infty$ with $a\in(0,1]$. The result follows, if we can show that $a=1$ under the hypothesis.

To derive a contradiction, we assume that $a\in(0,1)$. Using that $\rho_\infty$ is the minimum of the energy, we can optimize over dilations and over the mass parameter $a$ of the optimizer $\rho_\infty$ to get
\begin{equation}\label{optimization over r}
    \partial_r\left.\left(r^{d(1-m)}\alpha^m\Em[\mu_\infty]+\frac{r^k}{k}(\alpha^2I_k[\mu_\infty]+2\alpha(1-\alpha)J_k[\mu_\infty])\right)\right|_{r=1,\alpha=a}=0,
\end{equation}
and
\begin{equation}\label{optimization a}
    \partial_\alpha\left.\left(r^{d(1-m)}\alpha^m\Em[\mu_\infty]+\frac{r^k}{k}(\alpha^2I_k[\mu_\infty]+2\alpha(1-\alpha)J_k[\mu_\infty])\right)\right|_{r=1,\alpha=a}=0.
\end{equation}
From \eqref{optimization over r} and \eqref{optimization a} multiplied by $a$, we obtain the system
\begin{equation*}
    \begin{array}{rcl}
         \displaystyle-d(1-m)a^m\Em[\mu_\infty]&\displaystyle=&\displaystyle a^2I_k[\mu_\infty]+2a(1-a)J_k[\mu_\infty], \\
         \displaystyle-mka^m\Em[\mu_\infty]&\displaystyle=&\displaystyle2a^2I_k[\mu_\infty]+2a(1-2a)J_k[\mu_\infty].
    \end{array}
\end{equation*}
Replacing we obtain
\begin{equation*}
    \frac{d(1-m)}{mk}\left(2a^2I_k[\mu_\infty]+2a(1-2a)J_k[\mu_\infty]\right)=a^2I_k[\mu_\infty]+2a(1-a)J_k[\mu_\infty].
\end{equation*}
Rearranging, we get
\begin{equation*}
    I_k[\mu_\infty] =2\left(1+\frac{\left(k-\frac{d(1-m)}{m}\right)}{a\left(2\frac{d(1-m)}{m}-k\right)}\right) J_k[\mu_\infty].
\end{equation*}
Using the inequalities \eqref{IJbound}, we deduce
\begin{equation*}
    -1/2\le\frac{\left(k-\frac{d(1-m)}{m}\right)}{a\left(2\frac{d(1-m)}{m}-k\right)}\le \max\{0,2^{k-1}-1\}.
\end{equation*}
Changing variables, we consider $\beta\in(0,1)$, such that
\begin{equation*}
    \frac{d(1-m)}{m}=\beta k,
\end{equation*}
which implies
\begin{equation*}
    -1/2\le \frac{\left(1-\beta\right)}{a\left(2\beta-1\right)}\le \max\{0,2^{k-1}-1\}.
\end{equation*}
For $k\le 1$ this condition is never satisfied, which yields the contradiction in this case. For $k>1$, using monotonicity, this condition is satisfied as long as $\beta\ge \beta_k$, where $\beta_k$ satisfies
\begin{equation*}
    \frac{1-\beta_k}{2\beta_k-1}=2^{k-1}-1,
\end{equation*}
which implies the condition
\begin{equation*}
    \gamma\ge \frac{2^{k-1}}{2^k-1},
\end{equation*}
that contradicts the hypothesis $\frac{2^{k-1}}{2^k-1}k> \frac{d(1-m)}{m}$. Hence, $a=1$ that implies the minimizers are absolutely continuous. 
\end{proof}

\subsection{Condensation for a toy model}
Here we describe the conditions on an alternative related model to show that minimizers in fact can condensate. 

Fixing $d\ge2$ and $m\in(0,1)$, we take a smooth potential $V:\R^d\to [0,\infty)$ positive radially symmetric and increasing, such that $V(0)=0$ and that
\begin{equation*}
\ird \frac{1}{V^{1-m}(x)}\;dx=M<\infty.
\end{equation*}
Then, there exists $\gamma_0(M,m)$, such that for any $\gamma<\gamma_0$ all the optimizer of
\begin{equation*}
\Em[\rho]+\gamma^{-1}\ird V(x)\;d\rho(x)
\end{equation*}
are of the form $\rho_\infty=a\delta_0+(1-a)\mu_\infty$ with $a\in(0,1)$.

We prove this by contradiction. If $a=1$, then the Euler-Lagrange conditions imply that exists $C$ such that
\begin{equation*}
\rho_\infty(x)=\frac{\gamma}{\left(\frac{m}{1-m}(V-C)\right)^{1-m}}.
\end{equation*}
Moreover, because $V$ vanishes at zero, the Euler-Lagrange conditions imply that $C\le 0$. Hence,
\begin{equation*}
\rho_\infty(x)\le \frac{\gamma}{\left(\frac{m}{1-m}V\right)^{1-m}}.
\end{equation*}
Integrating, we have
\begin{equation*}
1=\ird \rho_\infty(x)\;dx\le \ird \frac{\gamma}{\left(\frac{m}{1-m}V\right)^{1-m}}\;dx=\gamma M\left(\frac{1-m}{m}\right)^{1-m}.
\end{equation*}
Taking $\gamma$ small enough yields the contradiction. We believe this scenario may happen for the nonlinear interaction problem. However, at this moment we did not succeed in finding the right conditions for condensation of minimizers of \eqref{freeen}. This scenario also happens in related Bose-Einstein-Fokker-Planck equations, see \cite{BGT11,To12}.

\section{Zones I and III}
Next, we show that the condition $k>d(1-m)/m$ is sharp in Theorem~\ref{thm energy bounded below}. The following has already been shown in \cite{CDP}, but we include its proof here for the sake of completeness.
\begin{theorem}
Given $m<1$ and $0<k<d(1-m)/m$, then for any $\eps>0$ there exists $\rho\in\mathcal{P}(\R^d)$ such that
\begin{equation*}
\mF_{m,k}[\rho]=-\infty.
\end{equation*}
\end{theorem}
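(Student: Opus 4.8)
The plan is to exhibit an explicit family of probability densities along which $\mF_{m,k}$ diverges to $-\infty$, exploiting the fact that when $0<k<d(1-m)/m$ the entropy scales more strongly than the interaction energy under concentration. The natural competitor is a rescaled profile: fix any $\psi\in C_c^\infty(\R^d)\cap\prob$, say a normalized indicator-type bump, and set $\psi_r(x)=r^{-d}\psi(x/r)\|\psi\|_1^{-1}=r^{-d}\psi(x/r)$. Under this scaling $\Em[\psi_r]=r^{d(1-m)}\Em[\psi_1]$ and $I_k[\psi_r]=r^k I_k[\psi_1]$, so
\begin{equation*}
\mF_{m,k}[\psi_r]=r^{d(1-m)}\Em[\psi_1]+\frac{r^k}{2k}I_k[\psi_1].
\end{equation*}
Since $0<m<1$ we have $\Em[\psi_1]<0$, while $I_k[\psi_1]>0$. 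The key inequality $k<d(1-m)/m<d(1-m)$ (using $m<1$ in the last step) means that as $r\to\infty$ the negative term $r^{d(1-m)}\Em[\psi_1]$ dominates the positive term $r^k I_k[\psi_1]/(2k)$, so $\mF_{m,k}[\psi_r]\to-\infty$. This already shows the infimum of $\mF_{m,k}$ over $\prob$ is $-\infty$.

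The remaining point is to upgrade "infimum $=-\infty$" to "there exists a single $\rho\in\prob$ with $\mF_{m,k}[\rho]=-\infty$," which is what the statement literally asks. First I would observe that the entropy $\Em$ and the interaction energy $I_k$ are both well-defined (with values in $[-\infty,0]$ and $[0,+\infty]$ respectively) on all of $\prob$, so $\mF_{m,k}[\rho]=-\infty$ is a meaningful assertion precisely when $\Em[\rho]=-\infty$ and $I_k[\rho]<\infty$. The construction is to superpose countably many of the bumps above at disjoint, far-apart locations with geometrically decaying masses. Concretely, choose radii $r_n\to\infty$ and centers $x_n$ with the balls $B_{r_n}(x_n)$ pairwise disjoint, and mass weights $c_n>0$ with $\sum_n c_n=1$; set
\begin{equation*}
\rho=\sum_{n\ge1} c_n\, r_n^{-d}\psi\!\left(\tfrac{\cdot-x_n}{r_n}\right).
\end{equation*}
By disjointness of supports, $\Em[\rho]=\sum_n c_n^m\, r_n^{d(1-m)}\Em[\psi_1]$, which can be driven to $-\infty$ by taking $r_n$ growing fast enough relative to $c_n^m$ (e.g. $r_n^{d(1-m)}\ge 2^n c_n^{-m}$). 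For the interaction energy one splits $I_k[\rho]$ into diagonal blocks $\iint_{B_{r_n}\times B_{r_n}}$ and cross terms; the diagonal blocks contribute $\sum_n c_n^2 r_n^k I_k[\psi_1]$, and the cross terms are controlled by the $k$-th moments $\sum_n c_n J_k[\text{bump}_n]\lesssim\sum_n c_n(|x_n|^k+r_n^k)$. Choosing the $c_n$ to decay fast enough (faster than any fixed power, e.g. $c_n=2^{-n^2}$ after normalization) while $r_n,|x_n|$ grow only, say, polynomially — compatible with $r_n^{d(1-m)}\ge 2^n c_n^{-m}$ since $c_n^{-m}$ is itself only exponential — makes $I_k[\rho]<\infty$. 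Hence $\mF_{m,k}[\rho]=-\infty$.

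The main obstacle is purely bookkeeping: one must choose the three sequences $(c_n)$, $(r_n)$, $(|x_n|)$ simultaneously so that (i) the entropy sum diverges, (ii) the interaction sum (diagonal plus cross terms) converges, and (iii) the supports are disjoint, and verify these requirements are not in conflict. They are not, because the entropy requires $r_n^{d(1-m)}c_n^m$ large, i.e. $r_n$ large compared to a power of $1/c_n$, while the interaction energy only requires $c_n^2 r_n^k$ and $c_n|x_n|^k$ summable — and since $k<d(1-m)$, for a fixed growth rate of $r_n$ the quantity $c_n^2 r_n^k$ is much smaller than the quantity $c_n^m r_n^{d(1-m)}$ we are trying to blow up, so a rapidly decaying $(c_n)$ kills the former while a moderately growing $(r_n)$ suffices for the latter. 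Alternatively — and this is the cleanest route — one can avoid the superposition entirely by noting that a single profile with a heavy tail already works: take $\rho(x)=c\,(1+|x|)^{-\alpha}$ on $\R^d$ with $d<\alpha$ (so $\rho\in L^1$) but $\alpha m\le d$ (so $\rho^m\notin L^1$, giving $\Em[\rho]=-\infty$) and $\alpha>d+k$ (so $J_k[\rho]<\infty$, hence $I_k[\rho]<\infty$ by \eqref{gamma bound I J}). Such an $\alpha$ exists exactly when $d/m\le\alpha$ is compatible with $\alpha>d+k$, i.e. whenever $d/m>d+k$ fails to obstruct — which holds precisely in the regime $k<d(1-m)/m$, since then $d+k<d+d(1-m)/m=d/m$, so any $\alpha\in(d+k,\,d/m]$ works. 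I would present this one-line tail construction as the proof, with the dyadic superposition mentioned only as an alternative.
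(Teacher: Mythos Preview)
Your final heavy-tail construction $\rho(x)=c(1+|x|)^{-\alpha}$ with $\alpha\in(d+k,\,d/m]$ is correct and is essentially the continuous version of the paper's proof: the paper builds a dyadic step function with mass $\rho_j\sim 2^{-j\beta}$ on the annulus $B_{2^{j+1}}\setminus B_{2^j}$, which has density $\sim|x|^{-(\beta+d)}$, and the paper's condition $k<\beta<d(1-m)/m$ corresponds exactly to your $\alpha=\beta+d\in(d+k,\,d/m)$. So the core argument matches.

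That said, your preliminary scaling discussion contains a genuine error that is worth flagging because it misidentifies the mechanism. You write ``$k<d(1-m)/m<d(1-m)$ (using $m<1$)'', but for $0<m<1$ one has $1/m>1$ and hence $d(1-m)/m>d(1-m)$, not the reverse. In fact, in the portion of zone~I where $d(1-m)<k<d(1-m)/m$, simple dilation of a fixed bump does \emph{not} send $\mF_{m,k}$ to $-\infty$: as $r\to\infty$ the positive $r^k$ term dominates, and as $r\to0$ both terms vanish. (This is precisely the computation in the paper's Corollary on the reversed HLS inequality, where the infimum over dilations is finite.) The point of the theorem is that zone~I extends beyond the na\"ive scaling regime, and this requires a profile whose tail is tuned so that $\rho^m\notin L^1$ while $|x|^k\rho\in L^1$---which is what your power-law (or the paper's dyadic version) achieves.

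A smaller remark: in your multi-bump sketch, the parameter choices are inconsistent. You ask for $r_n^{d(1-m)}\ge 2^n c_n^{-m}$ with $c_n=2^{-n^2}$, which forces $r_n$ to grow like $2^{cn^2}$, not polynomially as you then assume. The approach can be repaired (e.g.\ $c_n=2^{-n}$, $r_n=2^{nm/(d(1-m))}$ works, using $k<d(1-m)/m$ for summability of $c_n r_n^k$), but since you plan to present the single heavy-tail profile as the proof, this is moot.
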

\begin{proof}
We construct a probability measure such that the entropy functional $\mathcal{E}_m$ is infinite but the interaction energy $I_k$ is bounded.
Decomposing $\R^d$ into dyadic rings, we consider
\begin{equation}\label{dyadicrho}
\rho=\sum_{j=0}^\infty \frac{\rho_j}{|B_{2^{j+1}}\setminus B_{2^{j}}|}\chi_{B_{2^{j+1}}\setminus B_{2^{j}}},
\end{equation}
where
\begin{equation*}
\rho_j=\frac{2^{-j\beta}}{\sum_{j=0}^\infty 2^{-j\beta}}.
\end{equation*}
Now we want to pick $\beta>0$ appropriately, such that $I_k[\rho]<\infty$ and $\Em[\rho]=-\infty$. By \eqref{gamma bound I J}, we have $I_k[\rho]\le J_k[\rho]$.
Using \eqref{dyadicrho}, the exact form for $\rho$, we get
\begin{equation*}
\ird |x|^k\;d\rho=\sum_{j=0}^\infty \frac{\rho_j}{|B_{2^{j+1}}\setminus B_{2^{j}}|}\int_{B_{2^{j+1}}\setminus B_{2^{j}}}\!\!\!\!|x|^k\;dx= C(d,k)\sum_{j=0}^\infty \rho_j 2^{jk}=C_1(d,k) \frac{\sum_{j=0}^\infty 2^{-j(\beta-k)}}{\sum_{j=0}^\infty 2^{-j\beta}},
\end{equation*}
where $C_1(d,k)$ is a constant that depends on the dimension and $k$. In order for the right hand side to be finite, which in turn bounds the interaction energy, we need 
\begin{equation}\label{cond1}
\beta>k.
\end{equation}
Turning our attention to the entropy and using the exact form of $\rho$ again, we get
\begin{equation*}
\ird \rho^m(x)\;dx=\sum_{j=0}^\infty \rho_j^m|B_{2^{j+1}}\setminus B_{2^{j}}|^{(1-m)}=C_2(d,k)\frac{\sum_{j=0}^\infty 2^{-j(m\beta-d(1-m))}}{\left(\sum_{j=0}^\infty 2^{-j\beta}\right)^m}.
\end{equation*}
For the right hand side to be infinite, we need
\begin{equation}\label{cond2}
m\beta<d(1-m).
\end{equation}
Therefore, combining \eqref{cond1} and \eqref{cond2}, we get
\begin{equation*}
k<\beta<\frac{d(1-m)}{m}.
\end{equation*}
By hypothesis, we have that $\beta<d(1-m)/m$, which implies we can take the midpoint and the result follows.
\end{proof}

Now we show that the case $m>1$ is much simpler than $m<1$.
\begin{theorem}[Existence]\label{thm:min}
Given $m>1$ and $k>0$, then the energy
\begin{equation*}
\mF(\rho)=\frac{1}{m-1}\ird \rho^m(x)\;dx+\frac{1}{2k} \ird\ird |x-y|^k\; d\rho(x)d\rho(y)
\end{equation*}
is bounded below and admits a minimizer $\rho_\infty\in\prob$. Furthermore, $\rho_\infty$ is radially symmetric and decreasing. Also, it is solution to the Euler-Lagrange equation: there exists $C\in\R$ and $R>0$, such that $\supp(\rho_\infty)=B_R$ and 
\begin{equation*}
    \begin{cases}
\displaystyle\frac{m}{m-1}\rho_\infty^{m-1}(x)+\frac{1}{k}\ird |x-y|^k\; d\rho_\infty(y)=C&\mbox{a.e. $x\in B_R$},\\[4mm]
\displaystyle\frac{m}{m-1}\rho_\infty^{m-1}(x)+\frac{1}{k}\ird |x-y|^k\; d\rho_\infty(y)\ge C&\mbox{a.e. $x\in \R^d$}.
    \end{cases}
\end{equation*}
\end{theorem}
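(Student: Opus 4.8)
The plan is to run the direct method of the calculus of variations, which is far smoother here than in Theorem~\ref{main theorem}: when $m>1$ both terms of $\mF$ are \emph{nonnegative}, so $\mF\ge 0$ and boundedness below is immediate, and the interaction energy $I_k$ provides confinement for free.

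For existence I would argue as follows. Since $\mF$ is finite on, say, a compactly supported bump, $m_0:=\inf_{\prob}\mF\in[0,\infty)$; since rearrangement does not increase $\mF$ (as recalled in the Preliminaries), I may take the minimizing sequence $\{\rho_n\}$ radially symmetric and non-increasing. For $n$ large $\mF[\rho_n]\le m_0+1$, so dropping the nonnegative entropy term gives $I_k[\rho_n]\le 2k(m_0+1)$, and by \eqref{IJbound} also $J_k[\rho_n]\le I_k[\rho_n]$ is uniformly bounded; Markov's inequality then gives tightness, hence (Prokhorov) a subsequence converges narrowly to some $\rho_\infty\in\prob$ with $J_k[\rho_\infty]\le\liminf J_k[\rho_n]<\infty$. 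The same bound $\int\rho_n^m\,dx\le (m-1)(m_0+1)$ shows $\{\rho_n\}$ is bounded in $L^m(\R^d)$ (here $m>1$), so a further subsequence converges weakly in $L^m$; testing against $C_c$ functions identifies this limit with $\rho_\infty$, which is therefore absolutely continuous with $\rho_\infty\in L^m$ — so, unlike the case $m<1$, no Dirac component can arise. Weak lower semicontinuity of the $L^m$-norm together with lower semicontinuity of $I_k$ under narrow convergence (cf. the Preliminaries) give $\mF[\rho_\infty]\le\liminf\mF[\rho_n]=m_0$, so $\rho_\infty$ is a minimizer; replacing it by its radially decreasing rearrangement (still a minimizer, since $\Em$ is unchanged and $I_k$ does not increase) we may assume $\rho_\infty$ is radially symmetric and non-increasing.

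Next I would obtain the Euler--Lagrange conditions. Perturbing by $\rho_\infty+\epsilon\phi$ with $\phi\in C^\infty_c(\R^d)$, $\int\phi\,dx=0$, used two-sidedly on $\{\rho_\infty>0\}$ and one-sidedly where $\rho_\infty$ vanishes — as in \cite{BCLR2,CDM,CCH1,CDP} — and differentiating $\mF$ at $\epsilon=0$ shows that
\[ \xi(x):=\frac{m}{m-1}\rho_\infty^{m-1}(x)+\frac1k\ird |x-y|^k\rho_\infty(y)\,dy \]
equals a constant $C$ for a.e.\ $x\in\{\rho_\infty>0\}$ and satisfies $\xi(x)\ge C$ for a.e.\ $x\in\R^d$. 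Since $\rho_\infty$ is radially non-increasing, $\{\rho_\infty>0\}$ is a ball, say $B_R$, on which $\rho_\infty>0$ a.e.; it only remains to rule out $R=\infty$. If $R=\infty$, then $\frac{m}{m-1}\rho_\infty^{m-1}(x)=C-\frac1k\int_{\R^d}|x-y|^k\rho_\infty(y)\,dy$ for a.e.\ $x$, but the right-hand side tends to $-\infty$ as $|x|\to\infty$ because $J_k[\rho_\infty]<\infty$ forces $\int_{\R^d}|x-y|^k\rho_\infty(y)\,dy$ to grow like $|x|^k$; this contradicts the nonnegativity of the left-hand side. Hence $\supp\rho_\infty=\overline{B_R}$ and the stated Euler--Lagrange system holds, the second inequality being automatic off $B_R$. (That $\rho_\infty$ is moreover bounded and smooth on $B_R$ then follows from continuity of the convolution term against the now compactly supported $\rho_\infty$, but this is not part of the statement.)

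The main obstacle — and essentially the only step with real content — is this last one: converting the Euler--Lagrange condition into the support structure, i.e.\ exploiting the unbounded growth of the interaction potential $|x|^k*\rho_\infty$ to force $R<\infty$. Everything else is a routine instance of the direct method, made painless by the positivity of both energies when $m>1$.
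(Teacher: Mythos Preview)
Your proof is correct and follows essentially the same route as the paper: reduce to a radially decreasing minimizing sequence, use nonnegativity of the entropy for $m>1$ to bound $I_k$ and then $J_k$ via \eqref{IJbound}, deduce tightness and pass to the limit by lower semicontinuity. The paper's proof is in fact terser than yours --- it does not spell out the $L^m$ weak compactness (which you correctly use to rule out singular parts) nor the argument that $R<\infty$, which you supply via the growth of $|\cdot|^k*\rho_\infty$; these additions are sound and fill in details the paper leaves implicit.
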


\begin{proof}
The energy is reduced by symmetric decreasing rearrangements, therefore we can take a minimizing sequence $\{\rho_n\}_{n\in\N}$ which is symmetric and decreasing. Using that $m>1$, we get
\begin{align*}
\mF_{m,k}(\rho_n)&=\frac{1}{m-1}\ird \rho_n^m(x)\;dx+ \frac{1}{2k}\ird\ird |x-y|^k\; d\rho_n(x)d\rho_n(y)\\
    &\ge \frac{1}{2k}\ird\ird |x-y|^k\; d\rho_n(x)d\rho_n(y)\ge \frac{1}{2k}\ird |x|^k\; d\rho_n(x),
\end{align*}
where the last inequality follows from \eqref{IJbound}.
Therefore, for $n$ large enough we get
\begin{equation*}
   \frac{1}{2k} \ird |x|^k\; d\rho_n(x)\le \mF_{m,k}(\rho_n)\le \inf \mF_{m,k}+1.
\end{equation*}
This implies that $\rho_n$ is tight. Therefore, there exists $\rho_\infty\in \prob$ with the $k$-th moment bounded, such that $\rho_n\rightharpoonup\rho_\infty$. It follows that $\rho_\infty$ is radially symmetric and decreasing and that it solves the Euler-Lagrange equation.
\end{proof}

\begin{remark}
Theorem~\ref{thm:min} can easily generalized to any interaction potential that grows at infinity, see \cite[Lemma 2.9]{CDP}.
\end{remark}

\begin{remark}
Arguing as in Corollary~\ref{cor:HLS}, Theorem~\ref{thm:min} implies that for any positive $\psi\in C^\infty_c(\R^d)$, $m>1$ and $k>0$ we have the family of functional inequalities
\begin{equation*}
0<\inf_{\prob}\mF_{m,k}\le C(m,k) I_k[\psi]^{\frac{d(m-1)}{k+d(m-1)}}\|\psi\|_{L^1(\R^d)}^{-\frac{mk+2d(m-1)}{k+d(m-1)}}\|\psi\|_{{L^m(\R^d)}}^{\frac{mk}{k+d(m-1)}}.
\end{equation*}
\end{remark}


\bibliographystyle{abbrv}
\bibliography{Bibliography}

\end{document}